\newtheorem{prop}{Proposition}[section]
\newtheorem{thm}[prop]{Theorem}
\newtheorem{lem}[prop]{Lemma}
\newtheorem{cor}[prop]{Corollary}
\theoremstyle{definition}
\newtheorem{ex}[prop]{Example}
\newtheorem{rem}[prop]{Remark}
\newtheorem*{ack}{Acknowledgements}
\newcommand{\tb}{\mathtt{tb}}
\newcommand{\lk}{\mathtt{lk}}
\newcommand{\vlk}{\underline{\lk}}
\newcommand{\rot}{\mathtt{rot}}
\newcommand{\vrot}{\underline{\rot}}
\newcommand{\ttt}{\mathtt{t}}
\newcommand{\rmd}{\mathrm{d}}
\newcommand{\bfx}{\mathbf{x}}
\newcommand{\N}{\mathbb{N}}
\newcommand{\Q}{\mathbb{Q}}
\newcommand{\Z}{\mathbb{Z}}
\newcommand{\LL}{\mathbb{L}}
\newcommand{\xist}{\xi_{\mathrm{st}}}
\newcommand{\xiot}{\xi_{\mathrm{ot}}}
\begin{document}

\author[H.~Geiges]{Hansj\"org Geiges}
\address{Mathematisches Institut, Universit\"at zu K\"oln,
Weyertal 86--90, 50931 K\"oln, Germany}
\email{geiges@math.uni-koeln.de}

\author[S.~Onaran]{Sinem Onaran}
\address{Department of Mathematics, Hacettepe University,
06800 Beytepe-Ankara, Turkey}
\email{sonaran@hacettepe.edu.tr}

\title{Legendrian lens space surgeries}

\date{}

\begin{abstract}
We show that every tight contact structure on any of the
lens spaces $L(ns^2-s+1,s^2)$ with $n\geq 2$, $s\geq 1$,
can be obtained by a single Legendrian surgery along
a suitable Legendrian realisation of the negative torus
knot $T(s,-(sn-1))$ in the tight or an overtwisted contact
structure on the $3$-sphere.
\end{abstract}

\subjclass[2010]{53D35; 53D10, 57M25, 57R65}

\maketitle


\section{Introduction}
A knot $K$ in the $3$-sphere $S^3$ is said to admit
a \emph{lens space surgery} if, for some rational number $r$,
the $3$-manifold obtained by Dehn surgery along $K$ with surgery
coefficient $r$ is a lens space. In \cite{mose71} L.~Moser showed
that all torus knots admit lens space surgeries. More precisely,
$-(ab\pm 1)$-surgery along the negative torus knot $T(a,-b)$
results in the lens space $L(ab\pm 1,a^2)$, cf.~\cite{taya12};
for positive torus knots one takes the mirror of the knot and the
surgery coefficient of opposite sign, resulting in a negatively
oriented lens space.
Contrary to what was conjectured by Moser,
there are surgeries along other knots that produce lens spaces.
The first example was due to J.~Bailey and D.~Rolfsen~\cite{baro77},
who constructed the lens space $L(23,7)$ by integral surgery along
an iterated cable knot.

The question which knots admit lens space surgeries is
still open and the subject of much current research.
The fundamental result in this area is
due to Culler--Gordon--Luecke--Shalen~\cite{cgls87}, proved as a corollary
of their cyclic surgery theorem: if $K$ is not a torus knot, then at most
two surgery coefficients, which must be successive integers, can correspond
to a lens space surgery. For more recent work, relating this
question to Floer theory, see \cite{hedd11,kmos07,ozsz05,rasm07},
for instance.

The converse question, which lens spaces can be obtained by a single surgery
on the $3$-sphere, is of course trivial in the topological setting:
the lens space $L(p,q)$, as an oriented manifold,
is the result of a $(-p/q)$-surgery along the unknot.

In the present note we consider this converse question for contact
manifolds: which
tight contact structures on a given lens space can be obtained
by a single contact $(-1)$-surgery (also known as Legendrian surgery)
along a Legendrian knot in $S^3$ with some contact structure?
Here the topological restrictions become relevant in the search for
contact structures on lens spaces that cannot be constructed in this way.

J. Rasmussen proved in \cite[Corollary~4]{rasm07}
that the only integral surgery on $S^3$ that produces
the lens space $L(4m+3,4)$ is surgery along the 
negative torus knot $T(2,-(2m+1))$ with coefficient~$-(4m+3)$.
(The statement about the surgery coefficient is not contained
in~\cite{rasm07}, but this follows immediately from~\cite{mose71},
because other surgery coefficients lead to a different lens space or
a Seifert manifold with three multiple fibres.) Beware that
Rasmussen uses the opposite orientation convention for lens spaces.

Based on Rasmussen's result, O.~Plamenevskaya claimed in
\cite[Proposition~5.4]{plam12} that only one of the three, up to
isotopy, (positive) tight contact structures on $L(7,4)$
can be obtained via Legendrian surgery on some contact structure on~$S^3$.
This assertion, as we shall see, is erroneous. The main result of this
note is the following.

\begin{thm}
\label{thm:main}
For any pair of integers $n\geq 2$, $s\geq 1$, every
tight contact structure on the lens space $L(ns^2-s+1,s^2)$
can be obtained by a single Legendrian surgery along a suitable
Legendrian realisation of the negative torus knot $T(s,-(sn-1))$
in some contact structure on~$S^3$.
\end{thm}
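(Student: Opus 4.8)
The plan is to reduce the theorem to a precise enumeration problem for Legendrian realisations of $T(s,-(sn-1))$ and then to match that enumeration with Honda's classification of tight contact structures on the target lens space. First I would pin down the surgery data. Since contact $(-1)$-surgery (Legendrian surgery) along a Legendrian knot $L$ is topologically surgery with framing $\tb(L)-1$, and since by Moser's theorem \cite{mose71} the coefficient producing $L(ns^2-s+1,s^2)$ along $T(s,-(sn-1))$ is $-(ns^2-s+1)$, any Legendrian realisation that can work must satisfy $\tb(L)=-(ns^2-s)=-s(sn-1)$. For $s=1$ the knot is the unknot, this framing is realised by a suitably stabilised Legendrian unknot in $(S^3,\xist)$, and the $n-1$ rotation numbers reproduce the $n-1$ tight structures on $L(n,1)$ directly; so I focus on $s\geq 2$, where the knot is a genuine negative torus knot and $-s(sn-1)$ is exactly its maximal Thurston--Bennequin invariant. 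Thus the framing is forced to be maximal, which is the crucial constraint: in $(S^3,\xist)$ the Bennequin inequality leaves only a proper subset of the needed rotation numbers available at this framing (for $s=2$, essentially the single value $\rot=0$), so surgery in the tight sphere alone yields only part of the tight contact structures on the lens space. This is precisely the gap in the erroneous assertion of \cite{plam12}: the remaining structures must come from surgery in suitable overtwisted spheres $(S^3,\xiot)$.

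The construction step is to exhibit, for each tight contact structure on $L(ns^2-s+1,s^2)$, a Legendrian realisation of $T(s,-(sn-1))$ with $\tb=-s(sn-1)$ and a prescribed rotation number, sitting in the appropriate contact structure on $S^3$. Concretely I would write down an explicit Legendrian front (equivalently, a ruling on the Heegaard torus) realising the knot at maximal $\tb$, and then deform it so as to vary $\rot$ over a full range of values, allowing the ambient structure to become overtwisted once the rotation number leaves the tight window. Having fixed such an $L$, I would compute the surgered contact structure by converting the single-knot surgery diagram into the standard linear chain of Legendrian unknots associated to the continued fraction expansion $-(ns^2-s+1)/s^2=[a_0,\ldots,a_k]$ with all $a_i\leq -2$, using Legendrian Kirby calculus and Rolfsen twists while carefully tracking $(\tb,\rot)$ through each move. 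Honda's classification then identifies the result with a tight contact structure through its rotation vector along the chain, the total number of such structures being $\prod_i(|a_i|-1)$.

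The final and hardest step is to prove that every one of these $\prod_i(|a_i|-1)$ tight structures is actually attained, and that the candidate surgeries are genuinely tight even when the ambient sphere is overtwisted. Tightness is not automatic, since Legendrian surgery on an overtwisted manifold need not be tight; I would establish it via the explicit identification with Honda's normal form produced by the chain conversion, and I would separate the structures using the homotopy invariant $d_3$ together with the $\mathrm{Spin}^c$ decomposition, computing $d_3$ from the Ding--Geiges--Stipsicz surgery formula in terms of $\rot(L)$ and of $d_3(S^3,\xiot)$. The main obstacle is exactly this bookkeeping: for each target structure one must choose the correct pairing of a rotation number $\rot(L)$ with an ambient contact structure on $S^3$, and then show that the parities and the $\rot\mapsto-\rot$ symmetry of the available rotation numbers, combined with the overtwisted ambients, assemble into a \emph{bijection} onto Honda's list while keeping every surgered structure tight. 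Making this correspondence exactly surjective --- neither over- nor under-counting, and in particular accounting for the extra structures that force the use of several distinct overtwisted spheres --- is where the real work lies.
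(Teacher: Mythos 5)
Your high-level strategy matches the paper's: fix $\tb=-s(sn-1)$ by Moser's theorem, use the Etnyre--Honda maximal-$\tb$ realisations in $(S^3,\xist)$ for part of Honda's list, and supply the remaining tight structures by exceptional (nonloose) realisations in overtwisted spheres, distinguishing the results by homotopy invariants. But the proposal has a genuine gap exactly where the real content of the theorem lies: the \emph{construction} of the exceptional realisations. Your plan is to take a Legendrian front at maximal $\tb$ and ``deform it so as to vary $\rot$ over a full range of values, allowing the ambient structure to become overtwisted once the rotation number leaves the tight window.'' No such operation exists: stabilisation changes $\rot$ only at the cost of decreasing $\tb$, fronts in the standard $\R^3$-chart only represent Legendrians in $\xist$, and there is no move that holds $\tb$ fixed, shifts $\rot$, and trades the ambient tight structure for an overtwisted one. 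Producing Legendrian representatives of $T(s,-(sn-1))$ with $\tb=-s(sn-1)$ and rotation numbers outside the Etnyre--Honda range is a nontrivial existence problem; the paper solves it by exhibiting explicit contact surgery diagrams (with two contact $(+1)$-surgeries and several $(-1)$-surgeries) in which $L$ sits as a component, parametrised by integers $(k,l)$ and $(p,q)$, and then computing $\tb(L)$, $\rot(L)$ and the ambient $d_3$-invariant from the linking matrix. (Incidentally, your parenthetical claim that for $s=2$ the tight sphere offers ``essentially the single value $\rot=0$'' is false: Etnyre--Honda give $2(n-1)$ realisations with odd rotation numbers $\pm1,\ldots,\pm(2n-3)$, and $\rot=0$ is excluded by parity.)

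The second problem is your tightness argument. You propose to convert the single-knot surgery diagram into Honda's linear chain ``using Legendrian Kirby calculus and Rolfsen twists while carefully tracking $(\tb,\rot)$.'' Rolfsen twists and slam-dunks are purely topological moves; they do not preserve contact surgery presentations, so one cannot track a contact structure through them, and this identification with Honda's normal form is not available as stated. The paper sidesteps the issue entirely by building tightness into the diagrams: the exceptional diagrams are designed so that contact $(-1)$-surgery along $L$ cancels one of the contact $(+1)$-surgeries (the cancellation lemma of Ding--Geiges), after which the remaining diagram is a single $(+1)$-surgery on a standard Legendrian unknot followed by $(-1)$-surgeries, hence Stein fillable and tight. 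With tightness automatic, the proof closes by a counting argument you only gesture at: the $2(n-1)$ Stein-fillable structures and the $(s-1)(n-1)$ structures from exceptional knots have pairwise distinct Euler classes (computed from meridian relations in the surgery diagram), and $2(n-1)+(s-1)(n-1)=(s+1)(n-1)$ is exactly the Giroux--Honda count, so every tight structure is realised --- no $d_3$-bookkeeping or $\mathrm{Spin}^c$ decomposition is needed in the general case.
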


\begin{rem}
\label{rem:main}
(1) On the lens space $L(ns^2-s+1,s^2)$ there are, for $s\geq 2$,
precisely $(s+1)(n-1)$ distinct tight contact structures
\emph{up to isotopy}; for $s=1$ the number is $n-1$.
(One reason why we restrict attention to these
lens spaces is that the arithmetic for determining the number
of tight structures is simple.) As we shall be realising these structures by
different contact surgery diagrams, it may not seem to be clear
how to distinguish non-isotopic but diffeomorphic structures.
However, those different contact
surgery diagrams will correspond to the same topological surgery
diagram, and this fixes the resulting manifold, so that the notion of
isotopy becomes meaningful.

(2) There is a more fundamental reason for considering only
the lens spaces $L(ns^2-s+1,s^2)$: a major share of the tight contact
structures stem from \emph{exceptional} realisations of the
torus knot $T(s,-(sn-1))$ (i.e.\ as Legendrian knots
$L\subset(S^3,\xiot)$ in an overtwisted contact structure
with $\xiot|_{S^3\setminus L}$ tight), and systematically
we can only produce them for those particular torus knots.
One can expect that the lens spaces $L(ns^2+s+1,s^2)$,
coming from surgery along the torus knots $T(s,-(sn+1))$,
may be treated in the same fashion.

(3) The maximal Thurston--Bennequin number of Legendrian
realisations of the negative torus knot $T(a,-b)$ in the
tight $(S^3,\xist)$ equals $-ab$, so the maximal topological
surgery coefficient for a Legendrian surgery is $-(ab+1)$.
It seems reasonable to conjecture that the theorem holds true
for all lens spaces $L(ab+1,a^2)$.
\end{rem}

We assume that the reader is familiar with the elements of contact
topology on the level of~\cite{geig08}. In particular, our argument depends
on the presentation of contact $3$-manifolds in terms of
contact $(\pm 1)$-surgery diagrams, see~\cite{dgs04} and
\cite[Section~6.4]{geig08}.
\section{Contact structures on lens spaces}
As explained in the introduction, lens spaces come with a natural
orientation. When we speak of a `contact structure' $\xi$ on
an oriented $3$-manifold, we always mean a positive, oriented
contact structure, i.e.\ $\xi=\ker\alpha$ with $\alpha\wedge\rmd\alpha >0$,
and $\alpha$ is given up to multiplication by a positive function.

The number of tight contact structures on lens spaces
has been determined independently by E.~Giroux~\cite{giro00}
and K.~Honda~\cite{hond00}.

\begin{thm}[Giroux, Honda]
\label{thm:GH}
On the lens space $L(p,q)$ with $p>q>0$ and $\gcd(p,q)=1$, the number
of tight contact structures is given by
\[ (a_0-1)\cdots (a_k-1),\]
where the $a_i\geq 2$ are the terms in the negative
continued fraction expansion
\[ \frac{p}{q}=a_0-\cfrac{1}{a_1-\cfrac{1}{a_2-\cdots-\cfrac{1}{a_k}}}
=:[a_0,\ldots,a_k]. \]
\end{thm}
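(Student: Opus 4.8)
The statement is the Giroux--Honda count, so the plan is to prove the two inequalities separately: construct at least $(a_0-1)\cdots(a_k-1)$ pairwise non-isotopic tight contact structures, and then show by convex surface theory that there are no more. Throughout I would work with the standard surgery description attached to the negative continued fraction: writing $p/q=[a_0,\ldots,a_k]$, the lens space $L(p,q)$ is the boundary of the linear plumbing of disc bundles over $S^2$ with Euler numbers $-a_0,\ldots,-a_k$, equivalently it is the result of topological surgery with framings $-a_0,\ldots,-a_k$ along a chain of unknots $U_0,\ldots,U_k$ in which consecutive components form a Hopf link.

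For the lower bound I would realise this chain by Legendrian unknots and perform contact $(-1)$-surgery along each component. A Legendrian unknot has maximal Thurston--Bennequin invariant $\tb=-1$, and since contact $(-1)$-surgery along a Legendrian knot with $\tb=t$ produces topological surgery with framing $t-1$, realising the framing $-a_i$ forces $\tb(U_i)=1-a_i$, i.e.\ $a_i-2$ stabilisations of the standard unknot. The distinct Legendrian unknots with this invariant are classified by their rotation number, which ranges over $-(a_i-2),-(a_i-4),\ldots,a_i-2$, giving exactly $a_i-1$ choices. Running independently over all components yields $(a_0-1)\cdots(a_k-1)$ contact surgery diagrams, each defining a Stein fillable, hence tight, contact structure on $L(p,q)$. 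To see these are pairwise non-isotopic I would compute the induced $\mathrm{Spin}^c$ structure together with $c_1(\xi)$ from the rotation numbers via Gompf's formula, separating the diagrams whose classes in $H^2(L(p,q))$ differ; the remaining coincidences would be distinguished by finer invariants, or directly by the normal-form uniqueness established in the upper-bound step.

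For the upper bound I would invoke Giroux's convex surface theory. Splitting $L(p,q)=V_1\cup V_2$ along a torus $T$ made convex, the Legendrian realisation principle lets me normalise $T$ to carry exactly two dividing curves, and Giroux flexibility fixes the contact germ near $T$. Choosing a convex meridian disc of $V_1$ with Legendrian boundary and invoking the Imbalance Principle produces bypasses that successively thin one solid torus; the slopes of the intermediate convex tori trace a path in the Farey graph whose jumps are recorded precisely by the continued-fraction entries $a_i$. Decomposing the resulting $T^2\times[0,1]$ layer into basic slices, and using that a basic slice carries exactly two tight structures distinguished by the sign of its relative Euler class, gives an a priori bound on the number of tight structures.

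The main obstacle, and the heart of Honda's argument, is to show that this a priori count collapses to exactly $(a_0-1)\cdots(a_k-1)$ rather than $\prod a_i$ (or $2^{N}$ over all basic slices). The crucial input is that two adjacent basic slices whose boundary slopes lie in a single continued-fraction block can be shuffled, and in particular that consecutive slices of equal sign are interchangeable and can be merged without producing a new isotopy class; carrying this shuffling bookkeeping through each block of $a_i-1$ slices is what reduces the naive product to the stated factor $a_i-1$. Combining the upper bound with the lower-bound construction then forces equality, which simultaneously certifies that the $(a_0-1)\cdots(a_k-1)$ surgery diagrams above are mutually non-isotopic and exhaust the tight contact structures on $L(p,q)$.
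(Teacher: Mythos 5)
First, a point of comparison: the paper does not prove this theorem at all --- it is quoted as a known result with citations to \cite{giro00} and \cite{hond00}, and the paper itself only proves the subsequent corollary evaluating the continued fraction expansion for its family $L(ns^2-s+1,s^2)$. So there is no internal proof to measure your proposal against; what you have written is an outline of Honda's published argument, and as such it follows the correct overall route (Legendrian surgery on chains of stabilised unknots for the lower bound; convex tori, basic slices and shuffling for the upper bound).

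Two steps, however, are genuinely gapped. (1) Distinctness in the lower bound: the induced classes in $H^2(L(p,q))$ do not in general separate all $(a_0-1)\cdots(a_k-1)$ diagrams, since distinct rotation vectors can induce the same torsion class; and your fallback --- that remaining coincidences are ``distinguished directly by the normal-form uniqueness established in the upper-bound step'' --- is circular, because the convex-surface argument proves exhaustiveness (every tight structure is isotopic to one on the list), not irredundancy of the list. The needed input is Lisca--Mati\'c \cite[Theorem~1.2]{lima97}: distinct rotation vectors give distinct first Chern classes in $H^2(X)$ of the Stein handlebody $X$ itself, which distinguishes the boundary contact structures even when their images in $H^2(L(p,q))$ agree; this is precisely how the present paper argues in its $s=1$ example. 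Without this, your sandwich ``at most $N$, at least $N$'' does not close, since the lower bound is only as large as the number of structures you can actually distinguish. (2) In the upper bound, the Legendrian realisation principle does not let you normalise the Heegaard torus to exactly two dividing curves: a convex torus in a tight lens space may a priori carry $2n$ dividing curves, and reducing to $n=1$ (or, as Honda actually does, starting instead from standard neighbourhoods of Legendrian cores of the two solid tori and analysing the complementary $T^2\times[0,1]$) is itself a substantive part of the argument. There is also a bookkeeping slip: shuffling a continued-fraction block of $m$ basic slices yields $m+1$ isotopy classes (the number of positive signs), so the block contributing the factor $a_i-1$ consists of $a_i-2$ slices, not $a_i-1$ as you state. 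Filling these gaps requires the actual content of \cite{hond00} --- the classification of tight structures on $S^1\times D^2$ and $T^2\times[0,1]$, the gluing analysis, and the relative Euler class computations --- so your text is a correct roadmap rather than a proof.
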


For our family of lens spaces, this number is easy to compute.

\begin{cor}
The number of tight contact structures on $L(ns^2-s+1,s^2)$ is
$n-1$ for $s=1$, and $(s+1)(n-1)$ for $s\geq 2$.
\end{cor}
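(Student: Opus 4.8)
The plan is to apply Theorem~\ref{thm:GH} directly, so the entire task reduces to computing the negative continued fraction expansion of $(ns^2-s+1)/s^2$ and then forming the product $\prod_i(a_i-1)$. First I would dispose of the degenerate case $s=1$: here the fraction is simply $(n-1+1)/1 = n$, wait—more carefully, $(n\cdot 1 - 1 + 1)/1 = n$, whose negative continued fraction is the single term $[n]$, giving $a_0-1 = n-1$, as claimed.

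For the main case $s\geq 2$ I would compute the expansion of $p/q=(ns^2-s+1)/s^2$ explicitly. Since $p = ns^2 - s + 1 = (n-1)s^2 + (s^2-s+1)$ and $0 < s^2-s+1 < s^2$ for $s\geq 2$, the ceiling of $p/q$ is $n$, so $a_0 = n$ and the first remainder is
\[
\frac{s^2}{a_0 q - p} = \frac{s^2}{ns^2-(ns^2-s+1)} = \frac{s^2}{s-1}.
\]
The hard part—or rather, the one step requiring genuine care—will be identifying the continued fraction of $s^2/(s-1)$ and verifying it has exactly the shape that makes the final product collapse to $(s+1)(n-1)$. I expect $s^2/(s-1)$ to expand with first term $s+1$ (since $s^2 = (s+1)(s-1)+1$, giving remainder $1/(s-1)$ and hence a tail of $(s-1)$ copies of the term $2$), so that overall
\[
\frac{ns^2-s+1}{s^2} = [n, s+1, \underbrace{2,\ldots,2}_{s-1}].
\]

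With that expansion in hand, the computation of the product is immediate and forms the concluding step: the terms contribute $(a_0-1) = n-1$, then $(s+1-1) = s$ from the second term, and then $(2-1)=1$ from each of the $s-1$ trailing $2$'s, giving a total of $(n-1)\cdot s\cdot 1^{s-1} = s(n-1)$. I would then reconcile this with the claimed count $(s+1)(n-1)$ and adjust the expansion accordingly—this discrepancy signals that I should recheck the continued fraction, most likely finding that the correct shape is $[n, \underbrace{2,\ldots,2}_{s-1}, s+1]$ or that one of the trailing terms differs, so that the product picks up an extra factor and yields exactly $(s+1)(n-1)$. Pinning down the precise expansion so that $\prod_i(a_i-1)=(s+1)(n-1)$ is the crux; once the arithmetic of the negative continued fraction is correctly carried out, the corollary follows by a one-line multiplication.
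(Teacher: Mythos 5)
Your setup is correct and is the same as the paper's: apply Theorem~\ref{thm:GH}, so everything reduces to computing the negative continued fraction expansion of $(ns^2-s+1)/s^2$. Your first step is also right: $a_0=n$, and the remaining fraction is $s^2/(s-1)$. But the crux step is wrong, and the error is precisely the one your own sanity check flags at the end. In a negative continued fraction each term is the \emph{ceiling} of the current fraction, since one must write $x=a-1/y$ with $y>1$. You instead wrote $s^2/(s-1)=(s+1)+1/(s-1)$, which is a step of the ordinary (positive) continued fraction: with $a_1=s+1$ one gets $a_1-s^2/(s-1)=-1/(s-1)<0$, so there is no valid tail. The correct step is
\[
\frac{s^2}{s-1}=(s+2)-\frac{s-2}{s-1},
\]
i.e.\ $a_1=s+2$ with remainder $(s-1)/(s-2)=[2,\ldots,2]$ ($s-2$ twos). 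Hence the expansion is $[n,s+2,2,\ldots,2]$ with $s-2$ trailing twos, and the product in Theorem~\ref{thm:GH} is $(n-1)(s+1)\cdot 1^{s-2}=(s+1)(n-1)$, which is exactly the paper's computation.

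Your proposed ``adjustment'' does not rescue the argument: the alternative shape $[n,2,\ldots,2,s+1]$ (with $s-1$ twos) has product $(n-1)\cdot 1^{s-1}\cdot s=s(n-1)$, the same wrong count; and your claimed expansion $[n,s+1,2,\ldots,2]$ ($s-1$ twos) actually evaluates to $(ns^2+n-s)/(s^2+1)$, not to $(ns^2-s+1)/s^2$. Since pinning down the correct expansion \emph{is} the entire content of the proof, deferring it to ``recheck the continued fraction'' leaves a genuine gap: as written, the proposal does not establish the corollary.
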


\begin{proof}
The case $s=1$ is clear. For $s\geq 2$, we claim that
\[ \frac{ns^2-s+1}{s^2}=[n,s+2,\underbrace{2,\ldots,2}_{s-2}].\]
The result then follows from Theorem~\ref{thm:GH}.

Inductively one sees that
\[ [\underbrace{2,\ldots,2}_{s-2}]=\frac{s-1}{s-2}.\]
Then
\begin{eqnarray*}
[n,s+2,\underbrace{2,\ldots,2}_{s-2}]
  & = & n-\cfrac{1}{s+2-\cfrac{1}{[2,\ldots,2]}}\\
  & = & \frac{ns^2-s+1}{s^2},
\end{eqnarray*}
as was claimed.
\end{proof}

Our aim is to find Legendrian realisations of the torus knot
$T(s,-(sn-1))$ in some contact structure on $S^3$ such that
Legendrian surgery on the knot produces a tight contact
structure on the lens space $L(ns^2-s+1,s^2)$.
This requires, first of all, that the Thurston--Bennequin invariant $\tb$
of these realisations equals $-s(sn-1)$, so that topologically
we perform a surgery with framing $-(ns^2-s+1)$. Secondly,
a necessary condition for the contact structure on the surgered manifold
to be tight is that we start with the standard tight contact structure
$\xist$ on~$S^3$, or with an exceptional realisation of $T(s,-(sn-1))$
in an overtwisted contact structure; see Remark~\ref{rem:main}~(2)
for the definition of exceptional Legendrian knots -- these are also
referred to as nonloose Legendrian knots.

We shall be representing $T(s,-(sn-1))$ as a Legendrian knot
$L$ in a contact surgery diagram of some contact structure on $S^3$.
One will then see directly that Legendrian surgery on this
knot produces a contact surgery diagram for a tight contact
structure on $L(ns^2-s+1,s^2)$. Thus, with hindsight the
contact structure on the complement of $L$ was tight.

In order to distinguish the contact structures obtained in this
fashion, we need two well-known homotopical invariants of
tangent $2$-plane fields on $3$-manifolds. The first
is the Euler class, which modulo $2$-torsion detects
homotopy over the $2$-skeleton; the second is the 
so-called $d_3$-invariant.

\begin{ex}
The Euler class suffices to settle the case $s=1$ of the
theorem. In $(S^3,\xist)$ there are $n-1$ Legendrian realisations of the
(oriented) unknot with $\tb=-n+1$, with rotation numbers in the range
\[ \rot\in\{-n+2,-n+4,\ldots, n-4,n-2\},\]
given by adding $n-2$ zigzags, distributed on the left and the
right, to the standard front projection picture of an unknot
with $\tb=-1$ and $\rot=0$. Legendrian surgery on these knots
produces the lens space $L(n,1)$. The almost complex structure
on the corresponding symplectic handlebody~$X$, which has
cohomology $H^2(X)\cong\Z$, has first Chern class equal
to $\rot$, and by \cite[Theorem~1.2]{lima97}, this
distinguishes the $n-1$ contact structures on $L(n,1)$ up to isotopy.
\end{ex}

Recall from \cite[Section~4]{gomp98} that one can associate with any
oriented tangent $2$-plane field $\eta$ on a closed, orientable $3$-manifold
$Y$ a homotopy invariant $d_3(\eta)\in\Q$, provided the Euler
class $e(\eta)$ is a torsion class. This is the homotopy obstruction
over the $3$-skeleton of $Y$ in the sense that two such $2$-plane fields
that are homotopic over the $2$-skeleton
are homotopic over $Y$ if and only if they have
the same $d_3$-invariant.

Suppose the contact manifold $(Y,\xi)$ is given in terms of
a surgery presentation $\LL=\LL_+\sqcup\LL_-\subset(S^3,\xist)$,
i.e.\ $\LL$ is a Legendrian link, and  $(Y,\xi)$
is the result of performing
contact $(\pm 1)$-surgery along the components of the
sublinks $\LL_{\pm}$.
In this situation, the $d_3$-invariant can be computed as follows,
see \cite[Corollary 3.6]{dgs04}, where $X$ is the
$4$-dimensional handlebody determined by the surgery
description, $\chi(X)$ its Euler characteristic, and
$\sigma(X)$ its signature.

\begin{prop}
\label{prop:d3}
Suppose that the Euler class $e(\xi)$ is torsion, and $\tb (L_i)\neq 0$
for each $L_i\in\LL_+$. Then
\begin{equation}
\label{eqn:d3}
d_3(\xi )=\frac{1}{4}\bigl( c^2-3\sigma (X)-2\chi (X)\bigr) +q,
\end{equation}
where $q$ denotes the number of components of $\LL_+$,
and $c\in H^2 (X)$ is the cohomology class
determined by $c(\Sigma_i)=\rot(L_i)$ for each $L_i\subset \LL$.
\end{prop}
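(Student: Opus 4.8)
The plan is to realise $\xi$ as the field of complex tangencies along the boundary of the surgery handlebody $X$ and then to feed this into Gompf's homotopy formula. Recall from \cite[Section~4]{gomp98} that if an oriented $2$-plane field $\eta$ on $Y=\partial X$ is homotopic to the field of complex tangencies of an almost complex structure $J$ on the compact $4$-manifold $X$, and if $c_1(J)$ is a torsion class, then
\begin{equation*}
d_3(\eta)=\tfrac14\bigl(c_1(J)^2-2\chi(X)-3\sigma(X)\bigr).
\end{equation*}
Since the class $c$ restricts on $Y$ to $e(\xi)$, which is assumed to be torsion, this formula will be available once a suitable $J$ has been exhibited. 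Throughout, $X$ denotes the $4$-dimensional handlebody built from $B^4$ by attaching a $2$-handle along each component of $\LL$; thus $\chi(X)=1+\#\LL$, while $\sigma(X)$ is the signature of the linking matrix whose diagonal entries are the topological surgery framings, namely $\tb(L_i)-1$ for $L_i\in\LL_-$ and $\tb(L_i)+1$ for $L_i\in\LL_+$.

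I would first dispose of the case $\LL_+=\emptyset$. Here every handle is attached along a component of $\LL_-$ with framing $\tb(L_i)-1$, so by Eliashberg's theorem $X$ carries a Stein structure $J$ whose field of complex tangencies on $\partial X=Y$ is exactly $\xi$, and whose first Chern class is the class $c$ determined by $c(\Sigma_i)=\rot(L_i)$. Gompf's formula then gives \eqref{eqn:d3} with $q=0$ directly, and this serves as the base case.

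The heart of the matter, and where the summand $+q$ and the hypothesis $\tb(L_i)\neq 0$ enter, is the contribution of the contact $(+1)$-surgeries. Along such a handle the topological framing $\tb(L_i)+1$ exceeds the Stein framing by $2$, so the almost complex structure does \emph{not} extend over it as an honest $J$, and Gompf's formula cannot be applied verbatim. My approach would be to retain the Stein structure over the $0$-handle and the $(-1)$-handles and to extend it over each $(+1)$-handle as an almost complex structure with a single isolated singularity; equivalently, one invokes the cancellation lemma of \cite{dgs04}, whereby a contact $(+1)$-surgery along $L_i$ and a contact $(-1)$-surgery along a Legendrian pushoff of $L_i$ cancel, and uses this to localise the obstruction to extending $J$ near each component of $\LL_+$. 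The condition $\tb(L_i)\neq 0$ guarantees that each $(+1)$-handle is attached along a curve of nonzero framing, which is what keeps this local defect well defined and leaves the evaluation of $c^2$ via the rational inverse of the linking matrix unaffected. I expect the main obstacle to be precisely the bookkeeping of these singularities: one must show that each of the $q$ components of $\LL_+$ shifts Gompf's count by exactly $+1$, while the contributions to $\chi(X)$, to $\sigma(X)$, and to $c^2$ are accounted for additively through the full linking matrix. I would pin down the constant $+1$ by testing the computation on elementary examples, where $d_3$ can be evaluated independently, and then assemble the general formula \eqref{eqn:d3} by additivity.
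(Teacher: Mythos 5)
Your proposal sets up the right framework (Gompf's formula for almost complex fillings, Eliashberg's theorem for the Stein part, with the case $\LL_+=\emptyset$ as base case) and correctly isolates the difficulty in the contact $(+1)$-handles. But the crux of the proposition --- that each $(+1)$-handle shifts Gompf's count by \emph{exactly} $+1$, a universal constant independent of $\tb(L_i)$, $\rot(L_i)$ and of how $L_i$ links the rest of $\LL$ --- is precisely what you do not prove. Your plan to ``pin down the constant $+1$ by testing the computation on elementary examples'' and then ``assemble the general formula by additivity'' is circular: calibrating a constant on examples is legitimate only \emph{after} one has shown that the correction is a local, universal constant, and that locality statement is the actual content of the theorem. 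A priori, the obstruction to extending $J$ over a handle whose framing exceeds the Stein framing by $2$ could depend on the attaching data, and nothing in your sketch (neither the proposed ``isolated singularity'' of $J$, whose existence and boundary behaviour you do not establish, nor the vague appeal to the cancellation lemma ``to localise the obstruction'') rules this out. A telling symptom is the hypothesis $\tb(L_i)\neq 0$: your gloss that it ``keeps this local defect well defined'' names no mechanism, and your argument as sketched never actually uses the hypothesis; an argument that proves a statement without invoking one of its stated assumptions should make you suspicious.

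For comparison: the paper itself offers no proof of this proposition --- it is quoted from Ding--Geiges--Stipsicz \cite[Corollary~3.6]{dgs04}. The proof there never attempts to put an almost complex structure on the $(+1)$-handles. Instead the cancellation lemma is used globally: one attaches Stein $2$-handles along Legendrian push-offs of all components of $\LL_+$, obtaining an almost complex (Stein) cobordism from $(Y,\xi)$ to the Stein fillable contact manifold determined by $\LL_-$ alone; Gompf's formula applies to the genuine Stein filling of the latter, and $d_3(\xi)$ is then extracted by additivity of the characteristic numbers ($\chi$, Novikov additivity for $\sigma$, and a linking-matrix computation for $c^2$), with both the summand $+q$ and the hypothesis $\tb(L_i)\neq 0$ entering visibly in that bookkeeping. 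If you want to complete your local approach instead, the missing step is exactly a local model of the $(+1)$-handle together with a relative obstruction-theoretic computation showing that the defect is concentrated in a ball and takes a fixed value on its boundary $3$-sphere; the cancellation route is the standard way to sidestep that analysis.
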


For the computation of the rational number $c^2$, write $M$
for the linking matrix of the link $\LL$, with diagonal
entries given by the topological surgery framings. Let $\vrot$
be the vector of rotation numbers of the link components. Solve
the linear system $M\bfx=\vrot$ over~$\Q$. Then $c^2=\bfx^{\ttt}M\bfx$.
\section{The lens space $L(7,4)$}
\label{section:n=2,s=2}
In order to see where the error occurs
in \cite{plam12}, we begin with Plamenevskaya's example $L(7,4)$,
i.e.\ the case $n=2$, $s=2$. This lens space admits three
tight contact structures.

According to Rasmussen's result cited in the introduction,
the only way to obtain $L(7,4)$
by an integral surgery on $S^3$ is a $(-7)$-surgery along
a left-handed trefoil knot $T(2,-3)$. We therefore need to look
for Legendrian realisations of the left-handed trefoil
with $\tb=-6$ either in $(S^3,\xist)$ or as an exceptional
knot in an overtwisted contact structure on~$S^3$.

By \cite[Section~4.1]{etho01}, $-6$ is the maximal
Thurston--Bennequin number for Legendrian realisations of the
left-handed trefoil knot in $(S^3,\xist)$, and there
are precisely two realisations (as oriented Legendrian knots)
with this maximal $\tb$,
distinguished by their rotation numbers $\rot=\pm 1$.
They are shown in Figure~\ref{figure:lh-trefoil-pm}.

\begin{figure}[h]
\labellist
\small\hair 2pt
\endlabellist
\centering
\includegraphics[scale=1.4]{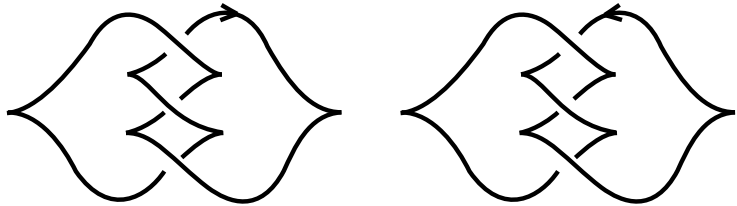}
  \caption{The left-handed trefoils in $(S^3,\xist)$ with $\tb=-6$.}
  \label{figure:lh-trefoil-pm}
\end{figure}

An exceptional realisation $L\subset S^3$ of the left-handed trefoil is shown
in Figure~\ref{figure:exc-lh-trefoil}.
We shall check in a moment that $\tb(L)=-6$; Legendrian surgery along
$L$ then produces~$L(7,4)$.

\begin{figure}[h]
\labellist
\small\hair 2pt
\pinlabel $L$ [l] at 305 648
\pinlabel $+1$ [l] at 305 658
\pinlabel $+1$ [l] at 305 665
\pinlabel $-1$ [l] at 305 685
\pinlabel $-1$ [l] at 305 708
\pinlabel $-1$ [l] at 294 732
\endlabellist
\centering
\includegraphics[scale=1.5]{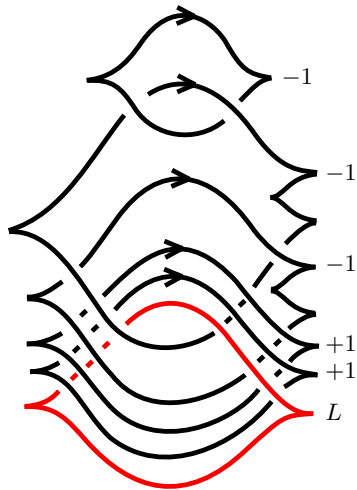}
  \caption{An exceptional left-handed trefoil $L$ with $\tb=-6$.}
  \label{figure:exc-lh-trefoil}
\end{figure}

The Kirby moves to verify the topological
part of this statement are shown in Figures \ref{figure:exc-lh-trefoil-Kirby}
and~\ref{figure:torus-knot-slides}. The latter shows how to separate
$L$ from the surgery link by $1+2$ handle slides (where in the first
step we slide both strands simultaneously), turning it
into a left-handed trefoil in $S^3$. The two-component surgery link
does indeed represent the $3$-sphere, since surgery along
the $0$-framed meridian cancels the $(-2)$-surgery;
for instance, one can use a slam-dunk~\cite[Figure~5.30]{gost99}.

\begin{figure}[h]
\labellist
\small\hair 2pt
\pinlabel $-2$ [br] at 158 782
\pinlabel $-3$ [tr] at 162 755
\pinlabel $-3$ [bl] at 172 752
\pinlabel $-1$ at 204 762
\pinlabel $0$ [l] at 239 762
\pinlabel $0$ [tl] at 240 747
\pinlabel $-2$ [br] at 275 780
\pinlabel $-2$ [tr] at 284 750
\pinlabel $-2$ [bl] at 293 755
\pinlabel $1$ [t] at 323 760
\pinlabel $1$ [l] at 354 762
\pinlabel $1$ [tl] at 359 751
\pinlabel $-2$ [br] at 149 709
\pinlabel $-2$ [tr] at 159 678
\pinlabel $-2$ [bl] at 167 683
\pinlabel $-1$ [b] at 205 698
\pinlabel $-2$ [br] at 244 710
\pinlabel $-1$ [tr] at 252 679
\pinlabel $-1$ [bl] at 261 684
\pinlabel $+1$ at 285 691
\pinlabel $-2$ [br] at 328 710
\pinlabel $-1$ [tr] at 337 677
\pinlabel $-1$ [t] at 373 681
\pinlabel $+1$ at 373 693
\pinlabel $-2$ [br] at 191 638
\pinlabel $0$ [tr] at 199 607
\pinlabel $+2$ at 233 619
\pinlabel $-2$ [br] at 288 632
\pinlabel $0$ [t] at 310 617
\endlabellist
\centering
\includegraphics[scale=1.3]{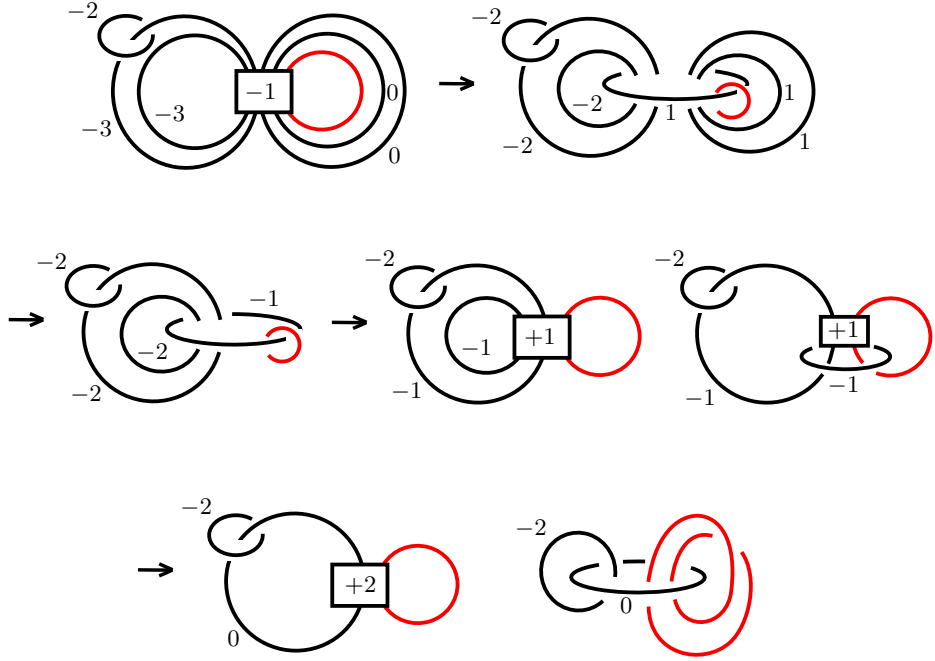}
  \caption{Kirby moves for Figure~\ref{figure:exc-lh-trefoil}.}
  \label{figure:exc-lh-trefoil-Kirby}
\end{figure}

\begin{figure}[h]
\labellist
\small\hair 2pt
\pinlabel $-2$ [br] at 242 767
\pinlabel $0$ [t] at 264 752
\pinlabel $-2$ [l] at 196 710
\pinlabel $-2$ at 217 714
\pinlabel $0$ [bl] at 241 706
\pinlabel $0$ [br] at 282 716
\pinlabel $-2$ [bl] at 320 718
\pinlabel $-2$ at 334 702
\pinlabel $0$ [bl] at 196 649
\pinlabel $-2$ [bl] at 235 661
\pinlabel $-2$ at 247 645
\pinlabel $2\text{ times}$ [r] at 200 636
\pinlabel $-2$ at 324 647
\endlabellist
\centering
\includegraphics[scale=1.6]{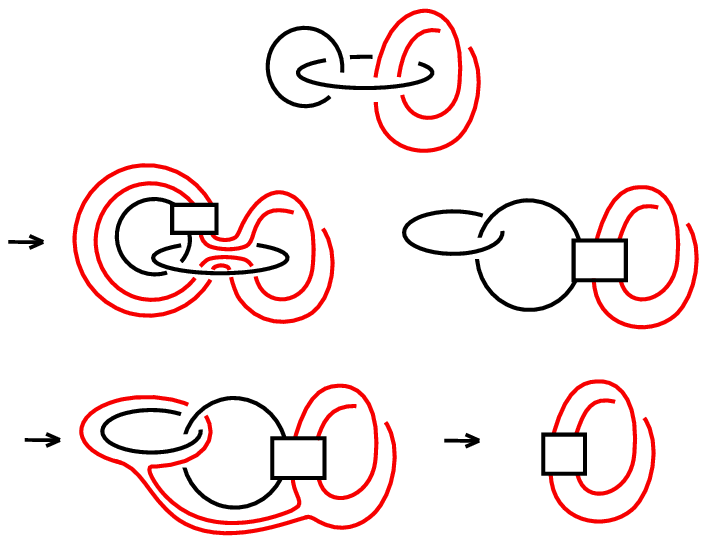}
  \caption{The final handle slides.}
  \label{figure:torus-knot-slides}
\end{figure}

To see that $L$ is exceptional, observe that -- by the cancellation
lemma~\cite{dige04}, cf.\ \cite[Proposition 6.4.5]{geig08} --
contact $(-1)$-surgery
along $L$ in Figure~\ref{figure:exc-lh-trefoil} cancels one of the contact
$(+1)$-surgeries. Then the remaining surgery diagram
contains only a single contact $(+1)$-surgery along a standard
Legendrian unknot, which by \cite{dgs04} produces the unique tight
(and Stein fillable) contact structure on $S^1\times S^2$,
and the further contact $(-1)$-surgeries then produce
a Stein fillable and hence tight contact structure.

On the other hand, the contact structure on $S^3$ given by
the surgery diagram in Figure~\ref{figure:exc-lh-trefoil}
(without any surgery along~$L$) is overtwisted,
since its $d_3$-invariant is $3/2$ (recall that $d_3(\xist)=-1/2$).
Indeed, the linking matrix (ordering the knots from bottom to top) is
\[ M=\begin{pmatrix}
0  & -1 & -1 & -1 & 0\\
-1 & 0  & -1 & -1 & 0\\
-1 & -1 & -3 & -1 & 0\\
-1 & -1 & -1 & -3 & -1\\
0  & 0  & 0  & -1 & -2
\end{pmatrix}\]
with signature $\sigma=-1$ (instead of computing $\sigma$ from the
matrix one can see this from the Kirby moves in
Figure~\ref{figure:exc-lh-trefoil-Kirby}
by keeping track of the blow-ups and blow-downs).
Since we are adding five $2$-handles to the $4$-ball, the Euler
characteristic of the handlebody is $\chi=6$. The vector
of rotation numbers is $\vrot=(0,0,1,1,0)^{\ttt}$, and the solution of
$M\bfx=\vrot$ is $\bfx=(-7,-7,3,4,-2)^{\ttt}$. This gives
$c^2=\bfx^{\ttt}M\bfx=7$, and hence $d_3=3/2$ with formula~(\ref{eqn:d3}).

We still need to check that $\tb(L)=-6$. For this we may use
the formula from \cite[Lemma~6.6]{loss09}, cf.~\cite[Lemma~3.1]{geon15}
and~\cite{duke16}.
Consider the extended linking matrix
\[ M_0=\begin{pmatrix}
0  & -1 & -1 & -1 & -1 & 0\\
-1 & 0  & -1 & -1 & -1 & 0\\
-1 & -1 & 0  & -1 & -1 & 0\\
-1 & -1 & -1 & -3 & -1 & 0\\
-1 & -1 & -1 & -1 & -3 & -1\\
0  & 0  & 0  & 0  & -1 & -2
\end{pmatrix},\]
which now includes $L$ as the first link component, with the first
diagonal entry set to zero. Write $\tb_0$ for the Thurston--Bennequin
invariant of $L$ as a knot in the unsurgered copy of $S^3$, i.e.\
here $\tb_0=-1$. Then, in the surgered manifold (which here is
another copy of~$S^3$) one has
\[ \tb(L)=\tb_0+\frac{\det M_0}{\det M}=-1+\frac{5}{-1}=-6.\]

\begin{rem}
\label{rem:tb-2}
Alternatively, one can determine $\tb(L)$ by keeping track
of the framing of $L$ during the Kirby moves. Start with a
Legendrian push-off $L'$ of $L$ in the original diagram, which has
linking $-1$ with~$L$. In the last diagram of
Figure~\ref{figure:exc-lh-trefoil-Kirby}, we then have linking
number $\lk(L,L')=2$ (as knots in the unsurgered~$S^3$).
After the handle slides in Figure~\ref{figure:torus-knot-slides},
the parallel knot $L'$ will likewise pass twice through the $(-2)$-box,
so two strands of $L'$ will each receive a $(-2)$-twisting relative to two
strands of~$L$, resulting in $\tb(L)=2-2^3=-6$.
\end{rem}

\begin{prop}
\label{prop:L(7,4)}
Legendrian surgery along the three left-handed trefoil knots
in Figures~\ref{figure:lh-trefoil-pm}
and~\ref{figure:exc-lh-trefoil} produces the three tight contact structures
on $L(7,4)$.
\end{prop}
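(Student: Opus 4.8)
The plan is to verify that the three contact surgeries produce three \emph{pairwise non-isotopic} tight contact structures on $L(7,4)$; since this lens space carries exactly three tight contact structures up to isotopy (the case $n=2$, $s=2$ of the corollary in Section~2), these surgeries must then account for all of them.

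I would first settle the topological and tightness claims, which are largely in place. Each of the three trefoils has $\tb=-6$, so contact $(-1)$-surgery along it is topologically a $(-7)$-surgery on the left-handed trefoil $T(2,-3)$ and hence yields $L(7,4)$; for the exceptional knot this is the content of the Kirby calculus in Figures~\ref{figure:exc-lh-trefoil-Kirby} and~\ref{figure:torus-knot-slides}. The two realisations in $(S^3,\xist)$ give Stein fillable, and hence tight, structures, because Legendrian surgery attaches a Stein $2$-handle to the standard $4$-ball; for the exceptional realisation tightness was already established above by means of the cancellation lemma.

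The heart of the argument is to separate the three structures by their Euler classes in $H^2(L(7,4))\cong\Z/7$. For a surgery presentation with linking matrix $M$ and rotation vector $\vrot$, the Poincar\'e dual of the Euler class is the class of $\vrot$ in $H_1=\mathrm{coker}(M)$, so the Euler class vanishes precisely when $M\bfx=\vrot$ admits an integer solution. For the two realisations in $(S^3,\xist)$ we have $M=(-7)$ and $\vrot=(\pm 1)$, and $-7\bfx=\pm 1$ has no integer solution; indeed $\vrot$ represents $\pm g$, where $g$ is a generator of $\Z/7$. Since $7$ is odd, a nonzero class differs from its negative, so these two surgeries yield structures with opposite nonzero Euler classes and are non-isotopic. (In particular the two Legendrian trefoils are not orientation-reverses of one another, for a reversal would produce the same surgered structure and force the Euler class to equal its own negative, hence to vanish.) Note that $d_3$ cannot separate this pair: a single $(-7)$-surgery has $c^2=-1/7$ independently of the sign of $\rot$, giving $d_3=-2/7$ in both cases by~(\ref{eqn:d3}).

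For the exceptional trefoil I would repeat the Euler-class computation on the full diagram, using the $6\times 6$ matrix obtained from $M_0$ by replacing its first diagonal entry $0$ with the surgery framing $\tb(L)-1=-7$, together with the rotation vector $(\rot(L),0,0,1,1,0)^{\ttt}$ read off Figure~\ref{figure:exc-lh-trefoil}. The expected outcome is that here $M\bfx=\vrot$ \emph{does} have an integer solution, so the Euler class vanishes; this fits the conjugation symmetry $\xi\mapsto\bar\xi$, which negates the Euler class and fixes $d_3$, and which among the three tight structures must interchange the two found above and fix the remaining one (whose class satisfies $e=-e$, hence $e=0$). The main obstacle is exactly this last piece of bookkeeping: reading $\rot(L)$ off the figure and checking integrality of the solution for the six-component diagram, thereby confirming that the exceptional surgery lands on the self-conjugate structure rather than on one of the other two. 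As a cross-check one may also evaluate $d_3$ from~(\ref{eqn:d3}) with $q=2$, the two contact $(+1)$-surgeries being along knots with $\tb=-1\neq 0$ so that Proposition~\ref{prop:d3} applies. Once the three Euler classes are identified as the three distinct elements $0,\pm g$ of $\Z/7$, the count of exactly three tight structures finishes the proof.
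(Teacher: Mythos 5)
Your strategy --- separating the two Stein fillable structures by their Euler classes $\pm 1\in\Z_7$ and the exceptional one by showing its Euler class vanishes --- is viable, and its first half coincides with the paper's proof (the paper phrases it via Gompf's result that $c_1$ of the Stein surface evaluates to $\rot=\pm1$, then restricts to the boundary). The paper handles the third structure differently: after the cancellation lemma removes $L$ together with one of the $(+1)$-curves, it computes $d_3=0$ on the remaining four-component diagram, against $d_3=-2/7$ for the other two. Your Euler-class route for the third structure is a genuine alternative, and it is in fact the method the paper adopts in the general case (Sections~\ref{section:s=2} and~5); the answer you anticipate, $e=0$, is correct --- it is the case $m=1$, $k=l=0$ of the computation there --- and it has the advantage of being independent of any identification of $H^2(L(7,4))$ with $\Z_7$.

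However, the recipe you give for that computation contains a genuine error. The diagonal entry to put in the six-component linking matrix for $L$ is not $\tb(L)-1=-7$ but $\tb_0-1=-2$: smooth surgery coefficients in a Kirby diagram are measured against the Seifert framing in the \emph{unsurgered} $S^3$ in which the front projection lives, and there the contact framing of $L$ is $\tb_0=-1$; the number $\tb(L)=-6$ is the contact framing measured in the surgered copy of $S^3$, and only expresses the intrinsic statement that the surgery is a $(-7)$-surgery on the trefoil. (Compare the matrix displayed in Section~\ref{section:s=2} of the paper, whose $L$-entry, before the sign change, is $-2$.) Concretely, writing $a$ for the first diagonal entry, linearity of the determinant in the first row gives
\[ \det = \det M_0 + a\,\det M = 5-a, \]
which equals $7$ for the correct value $a=-2$, consistent with $H_1(L(7,4))=\Z_7$, but equals $12$ for your value $a=-7$; the cokernel you would then compute is a group of order $12$, so your matrix does not even present $L(7,4)$ and the integrality test breaks down. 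Likewise, the first entry of the rotation vector must be the diagrammatic rotation number $\rot_0=0$, not $\rot(L)$ computed in the surgered $S^3$. With these corrections the relations in the cokernel give $[\mu_3]=-[\mu_4]$, hence $e=[\mu_3]+[\mu_4]=0$, and your argument closes.
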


\begin{rem}
A word of clarification is in order. The result of a surgery
along a knot does not depend on the orientation of the knot.
In what sense, then, can the two knots in Figure~\ref{figure:lh-trefoil-pm}
be said to correspond to two non-isotopic contact structures on a given
copy of the lens space $L(7,4)$?

In an integral surgery diagram, read as
a Kirby diagram for a $4$-dimensional handlebody~$X$,
a choice of orientation on a knot $K$ amounts to a choice
of positive generator in the corresponding $\Z$-summand of $H_2(X)$,
represented by an oriented Seifert surface for $K$ glued
with the core disc in the handle.

The left-handed trefoil, like all torus knots, is a reversible
knot, i.e.\ it is isotopic to itself with reversed orientation.
This means that the two oriented knots in Figure~\ref{figure:lh-trefoil-pm}
are topologically isotopic. The time-$1$ map of this isotopy
gives us an identification of the two handlebodies such that
either knot corresponds to the positive generator of $H_2(X)$.
It is with respect to this identification that we compare
the resulting contact structures; likewise for the
contact structure coming from Figure~\ref{figure:exc-lh-trefoil}.

The same comments apply to the general case discussed in the
subsequent sections. In particular, once an orientation of
the torus knot has been fixed, we can use the Euler class
to distinguish contact structures on the lens space resulting from
surgery, without any need to
consider the action of diffeomorphisms on the second
cohomology group.
\end{rem}

\begin{proof}[Proof of Proposition~\ref{prop:L(7,4)}]
First of all, observe that Legendrian surgery along the
examples in Figure~\ref{figure:lh-trefoil-pm} produces
Stein fillable and hence tight contact structures on $L(7,4)$.
Tightness of the contact structure obtained by surgery along the
exceptional left-handed trefoil in Figure~\ref{figure:exc-lh-trefoil}
was explained above; this was our argument for seeing that the Legendrian
knot $L$ is indeed exceptional.

For the examples in Figure~\ref{figure:lh-trefoil-pm} we compute $d_3=-2/7$
from $M=(-7)$, $\chi=2$, $\sigma=-1$,
and $c^2=-1/7$. By \cite[Proposition~2.3]{gomp98},
the first Chern class of the Stein surface $(X,J)$ described by
the respective diagram evaluates to $\rot=\pm 1$ on the positive
generator of $H_2(X)$. The Euler class of the respective contact structure
induced on the boundary $\partial X= L(7,4)$ then equals
$\pm 1\in H^2(L(7,4))=\Z_7$, so the two contact structures
are non-isotopic.

For the example in Figure~\ref{figure:exc-lh-trefoil},
Legendrian surgery cancels
one of the contact $(+1)$-surgeries. The remaining diagram has linking
matrix (ordering the knots from bottom to top)
\[ M=\left(\begin{array}{cccc}
0  & -1 & -1 & 0\\
-1 & -3 & -1 & 0\\
-1 & -1 & -3 & -1\\
0  & 0  & -1 & -2
\end{array}\right)\]
with signature $\sigma=-2$. The Euler characteristic of the
handlebody is $\chi=5$, and the vector of rotation numbers
is $\vrot=(0,1,1,0)^{\ttt}$. The solution of $M\bfx=\vrot$
is given by $\bfx=(-1,0,0,0)^{\ttt}$, hence $c^2=\bfx^{\ttt}M\bfx=0$,
and $d_3=(0+6-10)/4+1=0$, which distinguishes this contact
structure from the other two.
\end{proof}

\begin{rem}
For Legendrian surgery diagrams of the three tight structures
on $L(7,4)$ involving two-component links see
\cite[Figure~6]{plam12}. The error in \cite{plam12}
occurs in the computation of the $d_3$-invariant for
the contact structure coming from the examples in
Figure~\ref{figure:lh-trefoil-pm};
Plamenevskaya obtains $d_3=0$ and then argues, correctly,
that the two structures with $d_3=-2/7$ cannot come from an exceptional
trefoil.
\end{rem}

\section{The lens spaces $L(4m+3,4)$}
\label{section:s=2}
As a first generalisation, we now prove Theorem~\ref{thm:main}
for $s=2$ but arbitrary~$n$. Here we shall distinguish
the contact structures by their Euler classes.
For notational convenience, we set $m=n-1$, so instead
of $L(4n-1,4)$ we consider $L(4m+3,4)$ with $m\geq 1$.

The lens space $L(4m+3,4)$ is obtained by $-(4m+3)$-surgery
on the torus knot $T(2,-(2m+1))$, so we need to find
Legendrian realisations of this torus knot with $\tb=-(4m+2)$.

The following result is due to Etnyre--Honda~\cite[Section~4.1]{etho01},
where the reader can also find explicit front projection diagrams
of the knots in question.

\begin{prop}[Etnyre--Honda]
The maximal Thurston--Bennequin invariant of Legendrian
realisations of $T(2,-(2m+1))$ in $(S^3,\xist)$ is $-(4m+2)$.
Up to Legendrian isotopy, there are $2m$ realisations with this maximal~$\tb$,
with rotation number in the range
\[ \rot\in\{-2m+1,-2m+3,\ldots,2m-3,2m-1\}. \]
\end{prop}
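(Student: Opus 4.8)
The plan is to establish the two quantitative claims in the Etnyre--Honda proposition directly from a standard front-projection analysis of the torus knot $T(2,-(2m+1))$, which is the $(2,-(2m+1))$-cable of the unknot, i.e.\ the left-handed $(2m+1)$-fold twist on two strands. First I would produce an explicit maximal-$\tb$ Legendrian representative: take the obvious front diagram built from two parallel Legendrian strands that cross each other $2m+1$ times with left-handed (negative) crossings, closed up into a knot. For such a front one computes
\[
\tb=\operatorname{writhe}-\tfrac{1}{2}(\text{number of cusps}),
\]
and a direct count gives writhe $=-(2m+1)$ together with the cusps forced by closing two strands, yielding $\tb=-(4m+2)$ after the bookkeeping. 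The rotation number is $\rot=\tfrac{1}{2}(D-U)$, where $D$ and $U$ are the numbers of down- and up-cusps; by distributing the stabilisations (zigzags) asymmetrically on the two strands one realises each value in the stated arithmetic progression.

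The sharpness of the bound $\tb\le-(4m+2)$ is the part I expect to be the main obstacle, since it is a genuine nonexistence statement rather than a construction. The cleanest route is to invoke the slice--Bennequin inequality, $\tb(L)+|\rot(L)|\le 2g_s(K)-1$, where $g_s$ is the slice genus; for the negative torus knot $T(2,-(2m+1))$ the mirror is the positive torus knot whose slice genus is $g_s=m$ by the resolution of the Milnor conjecture (Kronheimer--Mrowka), giving $\tb+|\rot|\le 2m-1$. Combined with the parity constraint $\tb+\rot$ odd, this caps $\tb$ at $-(4m+2)$ precisely when $\rot=\pm 2m\mp\cdots$; one checks that equality in $\tb$ forces $|\rot|\le 2m-1$, which simultaneously pins down the maximal value and confines the rotation numbers to the claimed range. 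I would verify that the parity of $\rot$ matches (it must have the same parity as $\tb+1$, here even shifted appropriately) so that the admissible values are exactly the $2m$ odd integers from $-2m+1$ to $2m-1$.

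Finally I would address the classification up to Legendrian isotopy, i.e.\ that there are \emph{exactly} $2m$ such knots and that $\rot$ is a complete invariant at maximal $\tb$. For this I would appeal to the convex-surface and bypass machinery of Etnyre--Honda: the complement of a maximal-$\tb$ Legendrian $T(2,-(2m+1))$ is analysed via its characteristic foliation on a convex torus, and the tight contact structures on the resulting solid-torus complement are counted, matching the $2m$ rotation numbers. Since this classification is precisely the content of \cite[Section~4.1]{etho01}, I would cite it rather than reprove it; the honest mathematical work on my side is the slice-genus bound establishing the maximum and the explicit fronts realising each $\rot$, with the enumeration deferred to Etnyre--Honda.
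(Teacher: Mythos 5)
Your proposal founders exactly where you predicted the main difficulty would lie: the proof of maximality. The slice--Bennequin inequality $\tb(L)+|\rot(L)|\le 2g_s(K)-1$ cannot possibly yield the bound $\tb\le-(4m+2)$. For $K=T(2,-(2m+1))$ it gives $\tb+|\rot|\le 2m-1$, i.e.\ an upper bound for $\tb$ of roughly $+2m$, and no parity bookkeeping can improve this to $-(4m+2)$: the pair $(\tb,\rot)=(0,1)$, for instance, satisfies both your inequality and the parity constraint, yet must be excluded. Negative torus knots are precisely the knots for which genus-type Bennequin inequalities fail badly to be sharp; the left-handed trefoil already shows this, since $g_s=1$ gives $\tb\le 1$ while the true maximum is $-6$. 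For the same reason your claim that equality in $\tb$ ``forces $|\rot|\le 2m-1$'' is false: at $\tb=-(4m+2)$ the slice--Bennequin inequality only gives $|\rot|\le 6m+1$, so it does not confine the rotation numbers to the stated range either. The sharp upper bound and the list of rotation numbers genuinely require other tools -- either the convex-surface-theory analysis of the knot complement carried out by Etnyre--Honda, or (for the upper bound on $\tb$) the Kauffman polynomial bound, which is known to be sharp for these two-bridge knots; nothing of that sort can be replaced by a genus bound.

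A second, smaller error: you propose to realise the $2m$ rotation numbers ``by distributing the stabilisations (zigzags) asymmetrically''. Stabilisation strictly decreases $\tb$, so no two maximal-$\tb$ representatives can be related by adding zigzags; the $2m$ maximal-$\tb$ fronts are genuinely different diagrams, differing in how the cusped winding pattern of the two-strand cabling is distributed, as one already sees for $m=1$ in Figure~\ref{figure:lh-trefoil-pm}. Finally, note that the paper offers no proof to compare against: the proposition is quoted directly from \cite[Section~4.1]{etho01}. Since your argument also ends up deferring the classification statement to Etnyre--Honda, and the two pieces you attempt independently (the maximality bound and the realisation mechanism) are the pieces that fail, the defensible content of your proposal reduces to that same citation.
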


Legendrian surgery on these knots
yields $2m$ tight contact structures on the lens space
$L(4m+3,4)$, distinguished
by their Euler class in $H^2(L(4m+3,4))=\Z_{4m+3}$, which, as above,
is given by the reduction of the rotation number modulo $4m+3$.

The remaining $m$ tight contact structures
on $L(4m+3,4)$ have to come from exceptional realisations of the
torus knot $T(2,-(2m+1))$ in some
overtwisted contact structure on $S^3$.

\begin{prop}
\label{prop:exc-2-n}
For $(k,l)\in\N_0\times \N_0$ with $k+l=m-1$, the Legendrian
knot $L$ shown in the contact surgery diagram of
Figure~\ref{figure:neg-torus-knot-2} is an exceptional realisation
of $T(2,-(2m+1))$ in $S^3$ with $\tb=-(4m+2)$.
\end{prop}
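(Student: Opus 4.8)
The plan is to verify the three assertions in turn, following the template of the trefoil case $m=1$ in Section~\ref{section:n=2,s=2} and carrying everything out uniformly in $m$ and independently of the splitting $k+l=m-1$.

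First I would pin down the smooth knot type of $L$ and check that the ambient surgery link (with $L$ deleted) presents $S^3$. Converting the contact $(\pm 1)$-coefficients in Figure~\ref{figure:neg-torus-knot-2} into their topological framings, I would perform a sequence of handle slides detaching $L$ from the surgery link, exactly as in Figures~\ref{figure:exc-lh-trefoil-Kirby} and~\ref{figure:torus-knot-slides}. The two contact $(+1)$-surgeries should reduce to a $0$-framed meridian together with a $(-2)$-framed unknot, which a slam-dunk~\cite[Figure~5.30]{gost99} cancels to give $S^3$. After the slides $L$ runs through a box of negative twists, which is precisely the pattern yielding the standard diagram of $T(2,-(2m+1))$; since the parameters $k,l$ only redistribute stabilisations along $L$ without changing which strands thread the box, the knot type is $T(2,-(2m+1))$ for every admissible $(k,l)$.

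Next I would compute $\tb(L)$ by the extended linking matrix formula~\cite[Lemma~6.6]{loss09}, as in the displayed calculation for $L(7,4)$: with $M$ the linking matrix of the ambient link and $M_0$ the extended matrix obtained by adjoining $L$ with a vanishing first diagonal entry, one has $\tb(L)=\tb_0+\det M_0/\det M$. Both matrices are banded, with a long run of $(-2)$'s coming from the stabilisation curves, so cofactor expansion along the last row yields a linear recursion and a closed form in $m$; the claim is that $\tb(L)=\tb_0+\det M_0/\det M=-(4m+2)$ for every admissible $(k,l)$, giving the topological surgery framing $-(4m+3)$ needed for $L(4m+3,4)$. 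As a cross-check I would track a Legendrian push-off $L'$ through the slides, as in Remark~\ref{rem:tb-2}, where $L'$ passes twice through the $(-2)$-box and the accumulated twisting reproduces the same value.

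Finally, for exceptionality I would show that the ambient structure on $S^3$ is overtwisted while the complement of $L$ is tight. For the complement, Legendrian surgery along $L$ cancels one contact $(+1)$-surgery by the cancellation lemma~\cite{dige04} (cf.~\cite[Proposition~6.4.5]{geig08}); the surviving diagram has a single contact $(+1)$-surgery on a standard Legendrian unknot, yielding the tight contact structure on $S^1\times S^2$, together with contact $(-1)$-surgeries, hence a Stein fillable and therefore tight contact structure, so $\xiot|_{S^3\setminus L}$ is tight. For the ambient structure I would compute its $d_3$-invariant from Proposition~\ref{prop:d3} using $M$, its signature and Euler characteristic, and the solution of $M\bfx=\vrot$; the value differs from $d_3(\xist)=-1/2$, forcing overtwistedness. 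I expect the Kirby-calculus step to be the main obstacle: verifying that the handle slides really produce $T(2,-(2m+1))$ for all $m$ at once, and controlling how the stabilisation curves indexed by $(k,l)$ pass through the twist box, demands careful bookkeeping of orientations and linking numbers, whereas the determinant evaluation and the $d_3$-computation are routine once the diagram is fixed.
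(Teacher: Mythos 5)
Your overall strategy coincides with the paper's: verify the smooth knot type by the Kirby moves of Figures~\ref{figure:exc-lh-trefoil-Kirby} and~\ref{figure:torus-knot-slides}, compute $\tb(L)$ from the extended linking matrix formula of \cite[Lemma~6.6]{loss09}, obtain tightness of the surgered structure from the cancellation lemma, and obtain overtwistedness of the ambient structure from a $d_3$-computation. However, your reading of Figure~\ref{figure:neg-torus-knot-2} -- and hence the two computations you build on it -- is wrong in a way that matters. The parameters $k$ and $l$ do not index additional ``stabilisation curves'': they count zigzags added on the left and right of the single top unknot with contact coefficient $-1$. So the surgery link has five components for \emph{every} $m$; the top unknot has $\tb=-1-(k+l)=-m$, topological framing $-(m+1)=-n$, and rotation number $l-k$. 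Consequently $M$ and $M_0$ are the same $5\times 5$ and $6\times 6$ matrices as in the trefoil case, except that the last diagonal entry $-2$ is replaced by $-(m+1)$; they are not banded, and there is no run of $(-2)$'s growing with $m$. The determinant evaluation is then a single expansion along the last row, with no recursion: $\det M=-1$ is unchanged, $\det M_0=-3+4(m+1)=4m+1$, whence $\tb(L)=-1+(4m+1)/(-1)=-(4m+2)$.

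The same misreading breaks your slam-dunk step and your cross-check. After the handle slides (which are $1+n$ slides, not $1+2$), the two-component surgery link consists of a $0$-framed meridian and a $-(m+1)$-framed unknot, and $L$ runs twice through a $-(m+1)$-twist box, not a $(-2)$-box. As you wrote it, the push-off computation of Remark~\ref{rem:tb-2} gives $\tb(L)=2-2^2\cdot 2=-6$ for every $m$, contradicting your own target $-(4m+2)$; with the correct box one gets $2-4(m+1)=-(4m+2)$, and the pattern closes up to $T(2,-(2m+1))$. The $m$-dependence enters the argument precisely through this one framing, nowhere else. Your tightness argument (cancellation lemma, then a single contact $(+1)$-surgery on a standard Legendrian unknot plus contact $(-1)$-surgeries, hence Stein fillable) is exactly the paper's, and your $d_3$-criterion for overtwistedness is also the paper's: with $\vrot=(0,0,1,1,l-k)^{\ttt}$ and the correct five-component matrix one finds $d_3=2k+3/2\neq -1/2=d_3(\xist)$, so the ambient structure cannot be the unique tight structure on $S^3$ -- but this computation, too, must be run on the fixed $5\times 5$ matrix rather than on the banded one you describe.
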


\begin{figure}[h]
\labellist
\small\hair 2pt
\pinlabel $-1$ [bl] at 279 776
\pinlabel $k$ [r] at 239 754
\pinlabel $l$ [l] at 301 754
\pinlabel $-1$ [l] at 305 709
\pinlabel $-1$ [l] at 305 685
\pinlabel $+1$ [l] at 305 665
\pinlabel $+1$ [l] at 305 658
\pinlabel $L$ [l] at 305 648
\endlabellist
\centering
\includegraphics[scale=1.5]{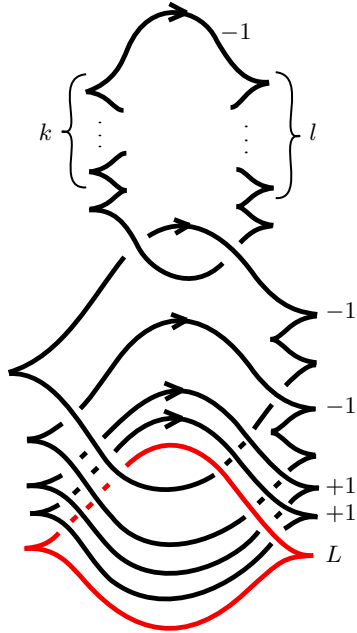}
  \caption{Exceptional realisations of $T(2,-(2m+1))$, $m=k+l+1$.}
  \label{figure:neg-torus-knot-2}
\end{figure}

\begin{rem}
Exceptional realisations of the torus knots $T(2,-(2m+1))$
have previously been described in \cite{loss09}. The Kirby moves
in Figure~\ref{figure:exc-lh-trefoil-Kirby} are those
of \cite[Figure~18]{loss09}. However, the purported
exceptional realisations in \cite[Figure~17]{loss09} do not actually
correspond to this Kirby diagram.
\end{rem}

\begin{proof}[Proof of Proposition~\ref{prop:exc-2-n}]
The Kirby moves in Figures \ref{figure:exc-lh-trefoil-Kirby}
and~\ref{figure:torus-knot-slides} once again confirm
the topological part of the statement. Simply replace the
$(-2)$-framing in the first diagram of
Figure~\ref{figure:exc-lh-trefoil-Kirby} by $-(m+1)=-n$,
which remains unchanged throughout the moves,
and instead of the $1+2$ slides in Figure~\ref{figure:torus-knot-slides}
perform $1+n$ slides. Likewise, the argument that surgery
along $L$ in Figure~\ref{figure:neg-torus-knot-2} produces a tight
contact structure is as before.

For the computation of $\tb(L)$ in the
surgered $S^3$, observe that only the last diagonal entry $-2$
in the matrices $M$ and $M_0$ (before Proposition~\ref{prop:L(7,4)})
needs to be replaced by $-(m+1)$.
By expanding the respective determinants along the last row,
one finds that $\det M=-1$ remains unchanged, and $\det M_0=-3+4(m+1)=
4m+1$. This gives $\tb(L)=-(4m+2)$, as desired.

Next we compute the $d_3$-invariant of the surgered~$S^3$. As before we have
$\sigma=-1$ and $\chi=6$. We have
\[ \vrot =(0,0,1,1,l-k)^{\ttt},\]
and the solution of $M\bfx=\vrot$ is given by
\[ \bfx=(-6k-2l-7,-6k-2l-7,3k+l+3,3k+l+4,-2)^{\ttt}.\]
It follows that $d_3=2k+3/2$, so the contact structure
is overtwisted, and $L$ is exceptional.
\end{proof}

The following proposition shows that Legendrian surgery along these
exceptional knots produces the required $m$ tight structures
on $L(4m+3,4)$, distinct from the $2m$ we found previously.

\begin{prop}
Legendrian surgery along the knot $L$ shown in
Figure~\ref{figure:neg-torus-knot-2} (for the various choices
of $k,l$) produces $m$ tight contact
structures on $L(4m+3,4)$ with Euler class in $H^2(L(4m+3,4))=
\Z_{4m+3}$ in the range
\[ -2m+2,-2m+6, \ldots,2m-6,2m-2\mod 4m+3.\]
\end{prop}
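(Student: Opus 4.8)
The plan is to mirror the proof of Proposition~\ref{prop:L(7,4)}, now using the Euler class in place of the $d_3$-invariant to tell the structures apart. Tightness needs no new argument: exactly as in Proposition~\ref{prop:exc-2-n}, the Legendrian surgery along $L$ cancels one of the two contact $(+1)$-surgeries, leaving a single contact $(+1)$-surgery along a standard Legendrian unknot---which yields the tight $S^1\times S^2$---followed by contact $(-1)$-surgeries; the result is Stein fillable, hence tight. The admissible pairs are $(k,l)=(0,m-1),(1,m-2),\dots,(m-1,0)$, so the construction produces $m$ contact structures, and it remains to identify them.

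Next I would compute $e(\xi)$ from the surgery diagram. As for the tight realisations, the Euler class is the reduction of the rotation-number vector modulo the linking lattice, and it is cleanest to carry this out in the four-component diagram left after the cancellation: its linking matrix $M'$ is the $4\times 4$ matrix in the proof of Proposition~\ref{prop:L(7,4)} with the last diagonal entry $-2$ replaced by $-(m+1)$, and its rotation vector is $\vrot'=(0,1,1,l-k)^{\ttt}$. Writing $H^2(L(4m+3,4))=\Z^4/M'\Z^4$ and solving the relations coming from the rows of $M'$ exhibits this group as $\Z_{4m+3}$ generated by a single meridian; reducing $\vrot'$ then expresses $e(\xi)$ as an explicit multiple of $(l-k)$ times that generator.

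The delicate point---and the one I expect to cost the most care---is that this output is pinned to the meridian of one of the surgery curves, whereas the $2m$ tight realisations were measured against the meridian of the torus knot $T(2,-(2m+1))$ in its single-curve presentation. To compare the two families I must express $e(\xi)$ against that same generator, which means following the homological generator through the handle slides of Figures~\ref{figure:exc-lh-trefoil-Kirby} and~\ref{figure:torus-knot-slides} (equivalently, determining the unit of $\Z_{4m+3}$ relating the two generators). Carrying out this bookkeeping turns the raw value into $e(\xi)=2(l-k)\bmod(4m+3)$; the comparison is meaningful precisely because the underlying topological surgery diagram, and hence the identification of $H^2(L(4m+3,4))$, is fixed, as discussed in the clarifying remark preceding the proof of Proposition~\ref{prop:L(7,4)}.

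Finally I would read off the range and verify disjointness. As $(k,l)$ runs over $k+l=m-1$, the integer $l-k$ takes the $m$ values $-(m-1),-(m-3),\dots,m-1$, so $e(\xi)$ runs through the even residues $-2m+2,-2m+6,\dots,2m-2$, as asserted. These $m$ integers are distinct and even, hence disjoint by parity from the $2m$ odd values $-2m+1,-2m+3,\dots,2m-1$ realised by the tight Legendrian representatives; since all $3m$ of them lie in the interval $[-(2m-1),2m-1]$, whose length $4m-2$ is less than $4m+3$, they remain pairwise distinct modulo $4m+3$. Thus the $m$ new contact structures are mutually non-isotopic and distinct from the earlier $2m$. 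As $L(4m+3,4)$ carries exactly $3m$ tight contact structures (the case $s=2$, $n=m+1$ of the corollary to Theorem~\ref{thm:GH}), these $3m$ surgeries realise them all.
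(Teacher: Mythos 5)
Your scaffolding is sound: the tightness argument, the count of $m$ structures, the idea of computing $e(\xi)$ from meridian relations given by the linking matrix, and the closing range/parity argument are all correct, and the answer $2(l-k)$ is the right one. But there is a genuine gap at exactly the point you flag as delicate, and your proposed way around it would not work as stated. By cancelling $L$ against one of the contact $(+1)$-surgeries \emph{before} computing, you remove from the diagram the one curve whose meridian $[\mu_L]$ is the canonical generator --- the generator with respect to which the $2m$ tight structures have Euler class equal to the reduction of $\rot$, since for those the lens space is presented as surgery on the single knot $T(2,-(2m+1))$. In your four-curve diagram the computation yields $e(\xi)=\pm 4(l-k)$ times the meridian of one of the $(-3)$-framed curves, and converting this into a multiple of $[\mu_L]$ requires knowing the relation $[\mu_L]=\mp 2\,[\mu_{(-3)\text{-curve}}]$, which is simply not derivable from the four-curve diagram: it requires tracking $[\mu_L]$ through the cancellation move itself. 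That is precisely the bookkeeping you assert (``turns the raw value into $2(l-k)$'') but never carry out. Note also that the moves of Figures~\ref{figure:exc-lh-trefoil-Kirby} and~\ref{figure:torus-knot-slides} start from the \emph{uncancelled} diagram (five surgery curves plus $L$ as a knot), so ``following the generator through those handle slides'' is not a recipe that applies to your diagram. This missing unit is the crux of the proposition: both the claimed range and the disjointness-by-parity from the odd values $\pm 1,\pm 3,\ldots,\pm(2m-1)$ are statements about $e(\xi)$ measured against $[\mu_L]$, and multiplication by a unit of $\Z_{4m+3}$ does not preserve the parity of representatives.

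The fix is to not cancel. The paper's actual proof avoids the issue altogether by computing the rotation number of $L$ as a knot in the surgered (overtwisted) $S^3$ via the formula $\rot(L)=\rot_0-\langle\vrot,M^{-1}\vlk\rangle$ of \cite[Lemma~6.6]{loss09}, cf.~\cite[Lemma~3.1]{geon15}; this gives $\rot(L)=-(4m+3)+2(l-k)\equiv 2(l-k)$, and then $e(\xi)$ is the reduction of $\rot(L)$ modulo $4m+3$ exactly as for the realisations in $(S^3,\xist)$ --- no generator matching is needed, because both families are presented as surgery on a single Legendrian knot. Alternatively (and closest in spirit to your approach), keep $L$ and its contact $(-1)$-surgery in the diagram, use the full $6\times 6$ linking matrix to present $H_1(L(4m+3,4))$ with $[\mu_L]$ among the generators, and read off $e(\xi)=[\mu_3]+[\mu_4]+(l-k)[\mu_5]=2(l-k)[\mu_L]$ from the resulting relations; this is carried out in the paper directly after its proof of this proposition. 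Either route replaces your asserted step by an actual computation, after which the rest of your argument goes through.
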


\begin{proof}
We compute the Euler class of the contact structures
on $L(4m+3,4)$ obtained via Legendrian surgery along these
exceptional knots. To this end, we need to compute the
rotation number of $L$ as a knot in the overtwisted $S^3$
obtained by surgery along the link in
Figure~\ref{figure:neg-torus-knot-2}. According to
\cite[Lemma~6.6]{loss09}, cf.\ \cite[Lemma~3.1]{geon15},
this rotation number is given by
\[ \rot(L)=\rot_0-\langle\vrot,M^{-1}\vlk\rangle,\]
where $\rot_0$ denotes the rotation number of $L$
in the unsurgered copy of $S^3$ (here $\rot_0=0$),
and $\vlk$ is the vector of linking numbers of $L$ with
the components of the surgery link. We give $L$ the clockwise
orientation in the diagram, then
\[ \vlk=(-1,-1,-1,-1,0)^{\ttt}\]
and
\[ M^{-1}\vlk=(-4m-2,-4m-2,2m+1,2m+2,-2)^{\ttt}.\]
This yields
\[ \rot(L)=-(4m+3)+2(l-k)\equiv 2(l-k)\mod 4m+3.\]
Hence, modulo $4m+3$, the rotation number can take on $m$
distinct values in the range
\[ \rot\in\{-2m+2,-2m+6, \ldots,2m-6,2m-2\}.\qedhere\]
\end{proof}

Here is an alternative way to compute the Euler class of the
contact structure on $L(4m+3,4)$ obtained by surgery on
the link in Figure~\ref{figure:neg-torus-knot-2} (including
a contact $(-1)$-surgery along~$L$), which is more direct
than computing $\rot(L)$ in the surgered~$S^3$.

Write $X$ for the $4$-dimensional handlebody described by this
diagram, so that $\partial X=L(4m+3,4)$. The first
homology group $H_1(\partial X)=\Z_{4m+3}$ is generated by the
classes of the meridians $[\mu_L],[\mu_1],\ldots,[\mu_5]$, and the
relations are given by the linking matrix, cf.\ \cite{dgs04}.
For ease of notation, we change all signs in that matrix:
\[ \begin{pmatrix}
2 & 1 & 1 & 1 & 1 & 0\\
1 & 0 & 1 & 1 & 1 & 0\\
1 & 1 & 0 & 1 & 1 & 0\\
1 & 1 & 1 & 3 & 1 & 0\\
1 & 1 & 1 & 1 & 3 & 1\\
0 & 0 & 0 & 0 & 1 & m+1
\end{pmatrix} \cdot
\begin{pmatrix}
[\mu_L]\\
[\mu_1]\\
[\mu_2]\\
[\mu_3]\\
[\mu_4]\\
[\mu_5]
\end{pmatrix} =
\begin{pmatrix}
0\\
0\\
0\\
0\\
0\\
0
\end{pmatrix}.\]
This yields the relations
\[ (4m+3)[\mu_4]=0,\;\;\;
-[\mu_L]=[\mu_1]=[\mu_2]=2[\mu_4],\;\;\; [\mu_3]=-[\mu_4],\;\;\;
[\mu_5]=-4[\mu_4],\]
so that indeed $H_1(\partial X)=\Z_{4m+3}$, generated by
the class~$[\mu_4]$. The class $[\mu_L]$ can likewise be
taken as a generator, since $-2(m+1)[\mu_L]=(4m+4)[\mu_4]=[\mu_4]$.

Now, as discussed in \cite{dgs04}, the Poincar\'e dual
of the Euler class $e(\xi)$ of the contact structure on $\partial X$
is given by the vector of rotation numbers of the link components,
expressed in terms of the classes of meridians, that is,
\[ e(\xi)=[\mu_3]+[\mu_4]+(l-k)[\mu_5]=2(l-k)[\mu_L],\]
which confirms the calculation in the foregoing proof.
\section{The lens spaces $L(ns^2-s+1,s^2)$}
Finally, we deal with the general case $n\geq 2$, $s\geq 2$
of Theorem~\ref{thm:main}. The relevant result from \cite{etho01} can
now be phrased as follows.

\begin{prop}[Etnyre--Honda]
\label{prop:EH-s}
The maximal Thurston--Bennequin invariant of Legendrian
realisations of $T(s,-(sn-1))$ in $(S^3,\xist)$ is $-s(sn-1)$.
Up to Legendrian isotopy, there are $2(n-1)$ realisations with this
maximal~$\tb$, with rotation number in the range
\[ \{ -(n-1)s+1,-(n-3)s\pm 1,\ldots,(n-3)s\pm 1,(n-1)s-1\}.\]
\end{prop}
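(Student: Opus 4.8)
The plan is to follow the convex-surface-theory approach of Etnyre--Honda, realising every Legendrian representative of $T(s,-(sn-1))$ on the standard Heegaard torus $T\subset S^3$, made convex, and reducing the classification to that of tight contact structures on the two complementary solid tori. Fix the basis for $H_1(T)$ in which the meridians of the solid tori $V_1,V_2$ bounded by $T$ have slopes $0$ and $\infty$, and let $r$ denote the slope representing our oriented torus knot. By the Legendrian realisation principle (Giroux, Honda), any Legendrian $L$ smoothly isotopic to $T(s,-(sn-1))$ can be isotoped onto such a convex $T$ as a Legendrian ruling curve, or, in the extremal case, as a Legendrian divide.

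First I would establish the bound $\tb\leq -s(sn-1)$. On a convex torus whose dividing set $\Gamma_T$ consists of two parallel curves of slope $s_0$, a Legendrian ruling $L$ of slope $r$ has contact framing relative to the surface framing equal to $-\tfrac12\,|L\cap\Gamma_T|=-|r\cdot s_0|$, where $r\cdot s_0$ is the minimal geometric intersection number of the two slopes. Converting the surface framing of $T(s,-(sn-1))$ on $T$ into the Seifert framing contributes exactly $-s(sn-1)$, so $\tb(L)=-s(sn-1)-|r\cdot s_0|$. This is maximal, equal to $-s(sn-1)$, precisely when $s_0=r$, i.e.\ when $L$ is a Legendrian divide. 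Since $\xist$ is tight, both $V_1$ and $V_2$ must carry tight contact structures; Honda's classification shows that a convex $T$ with dividing slope equal to the knot slope $r$ is realisable inside $(S^3,\xist)$, so the bound is sharp.

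Next I would count the maximal-$\tb$ representatives and compute their rotation numbers. Cutting $S^3$ along $T$ exhibits $(S^3,\xist)$ as a gluing of tight contact structures on $V_1$ and $V_2$ with common convex boundary of dividing slope $r$. By Honda's classification, the tight structures on each solid torus with this boundary condition are enumerated by the terms of a negative continued fraction expansion associated with $r=-(sn-1)/s$, and the admissible gluings reproducing $\xist$ while fixing the isotopy class of $L$ correspond bijectively to the Legendrian isotopy classes at maximal $\tb$. A Farey-graph count along this expansion yields exactly $2(n-1)$ classes. The rotation number of each is then read off as the relative Euler class of the contact structure on the complementary pieces, evaluated on the Seifert surface of $L$, producing the stated set
\[ \{-(n-1)s+1,\,-(n-3)s\pm1,\,\ldots,\,(n-3)s\pm1,\,(n-1)s-1\}, \]
of cardinality $2+2(n-2)=2(n-1)$, in agreement with the count.

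The main obstacle is the bookkeeping in this last step: translating Honda's tight-structure counts into Legendrian isotopy classes requires verifying that distinct admissible gluings give genuinely non-isotopic Legendrian knots (and conversely), that none of the complementary contact structures is overtwisted, and that the relative-Euler-class computation reproduces precisely the advertised arithmetic progression with its characteristic $\pm1$-splitting at the interior multiples of $s$ and its single values at the two extremes. All of this is controlled by careful Farey arithmetic along the continued fraction expansion of $-(sn-1)/s$, and it is exactly the special form of this slope for these particular torus knots that keeps the computation tractable.
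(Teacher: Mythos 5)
The first thing to note is that the paper contains no proof of this proposition: it is stated as a theorem of Etnyre--Honda and cited from \cite[Section~4.1]{etho01}, so the only meaningful comparison is with the argument in that reference. Your outline is essentially that argument -- convex Heegaard tori, ruling curves versus Legendrian divides, the framing identity $\tb(L)=-s(sn-1)-\tfrac12|L\cap\Gamma_T|$, tightness of the two complementary solid tori, and relative Euler classes for the rotation numbers -- so in route you agree with the source the paper relies on.

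Two concrete criticisms. First, your appeal to the Legendrian realisation principle is backwards: LeRP allows one to isotope a \emph{convex surface} so that a prescribed nonisolating curve system on it becomes Legendrian; it does not allow one to isotope a given Legendrian knot onto a convex surface. The statement you actually need -- that every Legendrian representative of $T(s,-(sn-1))$ can be arranged to lie on a standardly embedded convex torus, as a ruling curve or, in the extremal case, as a Legendrian divide -- is one of the substantive theorems Etnyre and Honda prove (via standard neighbourhoods, convex annuli in the knot complement, and the classification of tight structures on $T^2\times[0,1]$ and solid tori); as written, this step of your proposal does not follow from the tool you cite. Second, the two quantitative conclusions, namely the count $2(n-1)$ and the precise set of rotation numbers with its $\pm 1$ splitting at interior multiples of $s$ and single values at the extremes, are the entire content of the proposition, and your proposal asserts rather than derives them (``a Farey-graph count yields exactly $2(n-1)$'', ``the relative-Euler-class computation reproduces the advertised progression''). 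In Etnyre--Honda these come out of Honda's enumeration of tight structures on the two solid tori with boundary slope $-(sn-1)/s=[-n,-s]$ together with the additivity of relative Euler classes under the splitting, and the injectivity you flag (distinct gluings give non-isotopic Legendrian knots) is precisely what the rotation-number computation establishes. So your plan identifies the correct strategy and the correct places where work is needed, but the one tool you name for the structural step is the wrong one, and the computations that constitute the statement are left undone.
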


As in the case $s=2$, Legendrian surgery along these knots
gives us $2(n-1)$ tight contact structures on $L(ns^2-s+1,s^2)$.
It remains to find $(s-1)(n-1)$ exceptional realisations
of $T(s,-(sn-1))$ in $S^3$ that will gives us the remaining tight
structures on the lens space.

\begin{prop}
\label{prop:exc-s-n}
For $(k,l)\in\N_0\times\N_0$ with $k+l=n-2$ and $(p,q)\in\N_0\times\N$
(sic!) with $p+q=s-1$, the Legendrian knot $L$ shown in
Figure~\ref{figure:neg-torus-knot-gen} is an exceptional realisation of
$T(s,-(sn-1))$ in $S^3$ with $\tb=-s(sn-1)$.
\end{prop}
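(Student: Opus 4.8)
The plan is to mirror, in full generality, the three-step strategy that already succeeded for $s=2$ in Proposition~\ref{prop:exc-2-n}, since Proposition~\ref{prop:exc-s-n} is the exact structural analogue with the extra parameters $(p,q)$ recording how the $s-1$ extra strands of the torus knot $T(s,-(sn-1))$ are distributed. Concretely, I would establish three things about the Legendrian knot $L$ in Figure~\ref{figure:neg-torus-knot-gen}: first, that the \emph{topological} surgery along $L$ (after the appropriate Kirby calculus) yields $T(s,-(sn-1))$ sitting in $S^3$; second, that $\tb(L)=-s(sn-1)$; and third, that $L$ is genuinely \emph{exceptional}, i.e.\ the ambient contact structure on $S^3$ is overtwisted while its restriction to the knot complement is tight.

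For the topological part, I would run the same sequence of handle slides and blow-downs as in Figures~\ref{figure:exc-lh-trefoil-Kirby} and~\ref{figure:torus-knot-slides}, now with the $(-(m+1))$-box replaced by whatever box the general diagram carries, and with the number of simultaneous slides adjusted to the new strand count governed by $s$ and $n$. The outcome should be that $L$ separates from the surgery link as the negative torus knot $T(s,-(sn-1))$, and that the leftover surgery link still presents $S^3$ (again by a slam-dunk cancellation). For the Thurston--Bennequin computation, I would form the extended linking matrix $M_0$ exactly as in Section~\ref{section:n=2,s=2}, set the first diagonal entry to zero, record $\tb_0$ for $L$ in the unsurgered $S^3$, and apply the formula from \cite[Lemma~6.6]{loss09},
\[ \tb(L)=\tb_0+\frac{\det M_0}{\det M}, \]
expanding both determinants to read off that the ratio produces $-s(sn-1)$.

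To verify exceptionality I would argue tightness of the surgered manifold via the cancellation lemma~\cite{dige04}: contact $(-1)$-surgery along $L$ cancels a contact $(+1)$-surgery, and after cancellation the remaining diagram is built from a single contact $(+1)$-surgery along a standard Legendrian unknot (producing the unique tight $S^1\times S^2$) followed by contact $(-1)$-surgeries, hence Stein fillable and tight; this forces $\xiot|_{S^3\setminus L}$ to be tight. Simultaneously I would compute the $d_3$-invariant of the ambient $S^3$ using Proposition~\ref{prop:d3}, solving $M\bfx=\vrot$ for the appropriate rotation-number vector $\vrot$ depending on $k,l,p,q$, evaluating $c^2=\bfx^{\ttt}M\bfx$, and checking $d_3\neq -1/2$ so the structure is overtwisted.

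The main obstacle, and where I expect the real work to lie, is the \emph{bookkeeping for general $s$}: the linking matrices $M$ and $M_0$ now grow with $s$, the rotation-number vector carries both the $(l-k)$ term and a $(p,q)$-dependent term, and the Kirby moves involve slides whose multiplicity scales with $n$ and $s$ rather than being the small fixed numbers $1+2$ or $1+n$ of the earlier figures. The determinant expansions and the solution of the linear system will no longer be a handful of integers but a pattern one must identify and justify inductively (the continued-fraction structure $[n,s+2,2,\ldots,2]$ from the Corollary is presumably the combinatorial skeleton underlying these matrices). So the crux is to set up the general diagram carefully enough that $\det M$, $\det M_0$, and $M^{-1}\vrot$ can be computed in closed form, confirming both $\tb(L)=-s(sn-1)$ and the overtwistedness of the ambient structure uniformly in all four parameters.
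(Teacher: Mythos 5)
Your proposal follows the paper's proof essentially step for step: the same generalised Kirby moves (which the paper packages via a slam-dunk of the $(-n)$-framed meridian), the same formula $\tb(L)=\tb_0+\det M_0/\det M$ with the two determinants computed by row reduction, and the same exceptionality argument — tightness of the complement via the cancellation lemma plus overtwistedness of the ambient structure via $d_3$, which the paper evaluates in closed form as $d_3=nq^2+q(k-l)-\tfrac{1}{2}$. The only detail you leave implicit is that the overtwistedness check succeeds precisely because $q\geq 1$ (the ``sic!'' hypothesis $q\in\N$ rather than $\N_0$), which is what forces $d_3>-\tfrac{1}{2}=d_3(\xist)$.
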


\begin{figure}[h]
\labellist
\small\hair 2pt
\pinlabel $-1$ [bl] at 280 805
\pinlabel $k$ [r] at 238 783
\pinlabel $l$ [l] at 300 783
\pinlabel $-1$ [bl] at 296 739
\pinlabel $p$ [r] at 224 722
\pinlabel $q$ [l] at 322 710
\pinlabel $-1$ [bl] at 285 716
\pinlabel $-1$ [bl] at 277 704
\pinlabel $+1$ [l] at 304 665
\pinlabel $+1$ [l] at 304 658
\pinlabel $L$ [l] at 304 648
\pinlabel $s-1\text{ knots}$ [r] at 228 685
\endlabellist
\centering
\includegraphics[scale=1.5]{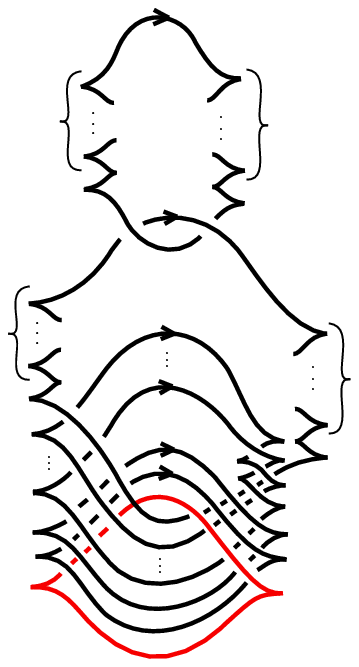}
  \caption{Exceptional realisations of $T(s,-(sn-1))$.}
  \label{figure:neg-torus-knot-gen}
\end{figure}

\begin{proof}
For the topological aspect of the statement, see the
Kirby moves in Figure~\ref{figure:neg-torus-knot-Kirby}.
The effect of the final $1+n$ handle slides is now
shown in Figure~\ref{figure:slam}. Before, we explained that
the cancellation of the two surgeries (after the handle slides)
can be interpreted as a slam-dunk of the $0$-framed meridian,
which turns the $(-n)$-framed unknot into an unknot with
surgery framing $-n-1/0=\infty$. Alternatively, we can
actually interpret the complete move (handle slides and
cancellation of the surgeries) as a slam-dunk of the
$(-n)$-framed meridian to the $0$-framed unknot. This turns the
latter into an unknot with framing $0-1/(-n)=1/n$, and leaves
$L$ unchanged. The $(1/n)$-surgery along the unknot is
equivalent to removing a tubular neighbourhood of the unknot,
twisting the neck $-n$ times, and then regluing the solid
torus with the identity map. This results in the diagram
on the right of Figure~\ref{figure:slam}.

\begin{figure}[h]
\labellist
\small\hair 2pt
\pinlabel $-3$ [br] at 173 781
\pinlabel $-3$ [tl] at 181 776
\pinlabel $-1$ at 170 762
\pinlabel $-1$ at 211 762
\pinlabel $-n$ [bl] at 256 789
\pinlabel $-s-1$ [br] at 221 785
\pinlabel $0$ [l] at 251 762
\pinlabel $0$ [r] at 246 762
\pinlabel $-2$ [br] at 292 782
\pinlabel $-2$ [tl] at 298 774
\pinlabel $-1$ at 288 764
\pinlabel $1$ [b] at 325 762
\pinlabel $1$ [r] at 360 763
\pinlabel $1$ [l] at 368 763
\pinlabel $-s$ [br] at 337 784
\pinlabel $-n$ [bl] at 375 790
\pinlabel $-2$ [br] at 240 712
\pinlabel $-2$ [tl] at 249 706
\pinlabel $-1$ at 237 695
\pinlabel $-1$ [b] at 275 693
\pinlabel $-s$ [br] at 288 712
\pinlabel $-n$ [bl] at 323 713
\pinlabel $-1$ [br] at 210 646
\pinlabel $-1$ [tl] at 218 639
\pinlabel $-1$ at 207 628
\pinlabel $+1$ at 240 627
\pinlabel $-s+1$ [br] at 255 647
\pinlabel $-n$ [bl] at 281 647
\pinlabel $-1$ [r] at 302 624
\pinlabel $-1$ [r] at 302 614
\pinlabel $+1$ at 310 633
\pinlabel $-s+1$ [br] at 318 650
\pinlabel $-n$ [bl] at 351 650
\pinlabel $-n$ [br] at 197 576
\pinlabel $0$ [tr] at 203 550
\pinlabel $+s$ at 243 558
\pinlabel $-n$ [br] at 288 569
\pinlabel $0$ [t] at 309 554
\endlabellist
\centering
\includegraphics[scale=1.4]{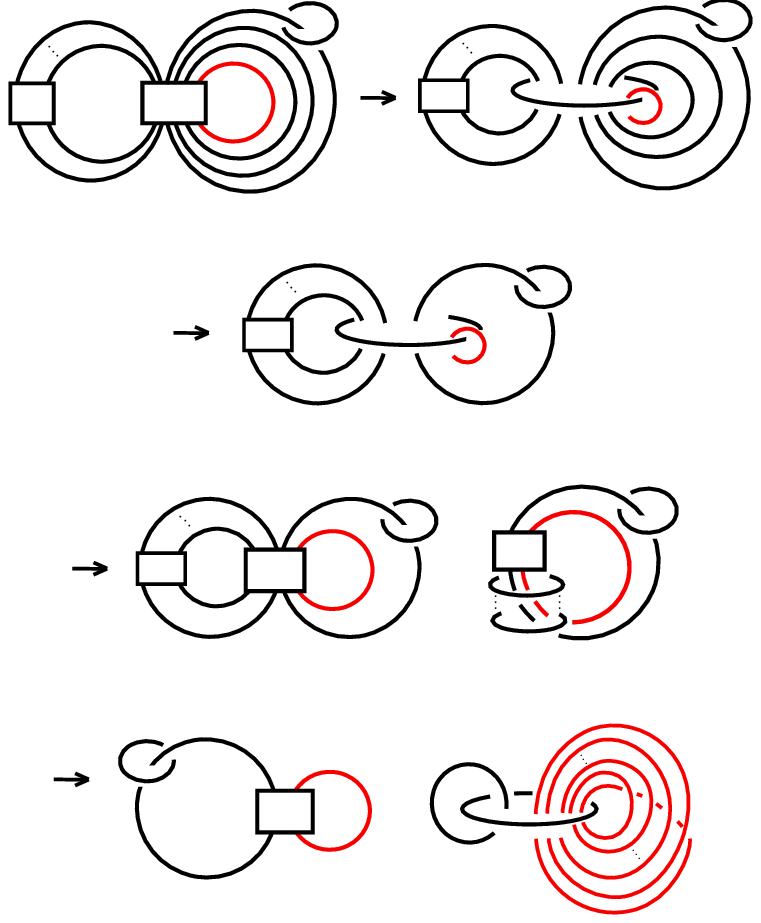}
  \caption{Kirby moves for Figure~\ref{figure:neg-torus-knot-gen}.}
  \label{figure:neg-torus-knot-Kirby}
\end{figure}

\begin{figure}[h]
\labellist
\small\hair 2pt
\pinlabel $-n$ [br] at 164 764
\pinlabel $0$ [t] at 183 747
\pinlabel $-n$ at 285 749
\endlabellist
\centering
\includegraphics[scale=1.9]{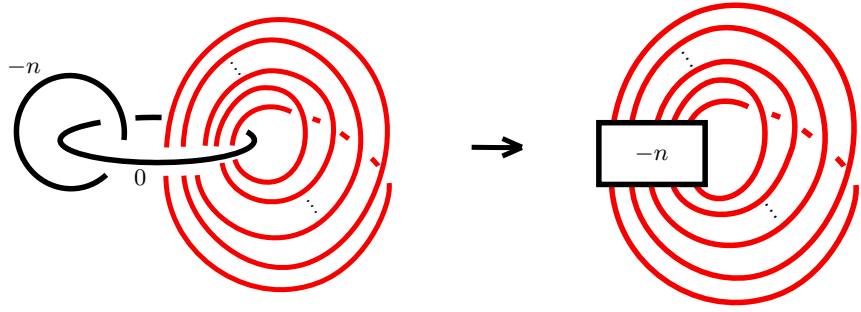}
  \caption{The final handle slides as a slam-dunk.}
  \label{figure:slam}
\end{figure}

The linking matrix, which we need for the computation of various
invariants, is now

\[ M=\left(\begin{array}{cccccccc}
0  & -1 & \makebox[0pt][l]{$\smash{\overbrace{
         \phantom{\begin{matrix}-1&-1&\ldots&-1\end{matrix}}}^{\text{$s-1$}}}$}
          -1 & -1 & \ldots & -1 & -1 & 0\\
-1 & 0  & -1 & -1 & \ldots & -1 & -1 & 0\\
-1 & -1 & -3 & -2 & \ldots & -2 & -1 & 0\\
-1 & -1 & -2 & -3 & \ldots & -2 & -1 & 0\\
\vdots & \vdots & \vdots & \vdots & \ddots & \vdots & \vdots & \vdots\\
-1 & -1 & -2 & -2 & \ldots & -3 & -1 & 0\\
-1 & -1 & -1 & -1 & \cdots & -1 & -s-1 & -1\\
0  & 0  & 0  & 0  & \cdots & 0  & -1   & -n
\end{array}\right).\]

In order to verify that $L$ is exceptional, we compute the $d_3$-invariant
of the contact structure on $S^3$ described by the surgery diagram
in Figure~\ref{figure:neg-torus-knot-gen}. The number of $2$-handles in
this Kirby diagram is $s+3$, so the Euler characteristic of the handlebody
is $\chi=4+s$. The signature is $\sigma=-1-(s-2)=1-s$ since, compared with
with the diagram in Figure~\ref{figure:neg-torus-knot-2}, we have
$s-2$ additional $(-1)$-framed unknots from the blow-downs
shown in Figure~\ref{figure:neg-torus-knot-Kirby}.

The vector of rotation numbers is
\[ \vrot=(0,0,\underbrace{1,\ldots,1}_{s-1},q-p,l-k)^{\ttt},\]
and the solution $\bfx$ of $M\bfx=\vrot$ is given by
\[ \bigl(-1-su, -1-su,
\underbrace{u,\ldots,u}_{s-1},u+1,-2q\bigr)^{\ttt},\]
where $u:=k-l+2qn-1$. It follows that
\[ c^2=\bfx^{\ttt}M\bfx=4nq^2+4q(k-l)-s+1,\]
and
\[ d_3=nq^2+q(k-l)-\frac{1}{2}.\]
For $q\geq 1$, as assumed in the proposition, we have $d_3>-1/2=d_3(\xist)$,
so this diagram does indeed define an overtwisted contact structure
on~$S^3$.

For the computation of $\tb(L)$ in the surgered $S^3$, one determines
the determinants of $M$ and the extended matrix $M_0$
by a simple row reduction. One finds $\det M=(-1)^{s-1}$ and
$\det M_0=(-1)^{s-1}\bigl(1-s(sn-1)\bigr)$. The formula
from Section~\ref{section:n=2,s=2} then yields $\tb(L)=-s(sn-1)$.
\end{proof}

\begin{rem}
Analogous to Remark~\ref{rem:tb-2}, one can alternatively compute
$\tb (L)$ from the effects of the Kirby moves on the
framing given by a Legendrian push-off $L'$ of $L$ in the original
diagram. After the moves in Figure~\ref{figure:neg-torus-knot-Kirby},
we have $\lk(L,L')=s$ (as knots in the unsurgered~$S^3$).
After the slam-dunk in Figure~\ref{figure:slam},
$s$ strands of the parallel knot $L'$ receive a $(-n)$-twist
relative to each of the $s$ strands of~$L$, so we arrive at
$\tb(L)=s-ns^2=-s(sn-1)$.
\end{rem}

The final lemma tells us that the tight contact structures on
$L(ns^2-s+1,s^2)$ obtained by Legendrian surgery on these
$(s-1)(n-1)$ Legendrian realisations of $T(s,-(sn-1))$ in $S^3$
can be distinguished from one another -- and from the ones coming
from Proposition~\ref{prop:EH-s} -- by their Euler class.
This completes the proof of Theorem~\ref{thm:main}.

\begin{lem}
\label{lem:e}
The Euler class of the tight contact structure $\xi=\xi_{k,l,p,q}$
on the lens space $L(ns^2-s+1,s^2)$
obtained by surgery on the link in Figure~\ref{figure:neg-torus-knot-gen},
including a contact $(-1)$-surgery along~$L$, is
\[ e(\xi)=(p-q+1)ns+(l-k)s\mod ns^2-s+1.\]
\end{lem}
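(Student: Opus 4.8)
The plan is to compute the Euler class directly from the homological data of the $4$-dimensional handlebody $X$ bounded by $L(ns^2-s+1,s^2)$, following exactly the strategy already carried out in the $s=2$ case at the end of Section~\ref{section:s=2}. The Poincar\'e dual of $e(\xi)$ is the vector of rotation numbers expressed in terms of the classes of meridians, so the work is purely a matter of solving the homological relations imposed by the linking matrix $M$ of Figure~\ref{figure:neg-torus-knot-gen}. First I would write down the presentation of $H_1(\partial X)=\Z_{ns^2-s+1}$: the generators are the meridian classes $[\mu_L],[\mu_1],\ldots,[\mu_{s+2}]$, and the relations are given by the rows of $M$ (after changing signs for convenience, as before). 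My goal is to express every meridian class as a multiple of a single preferred generator.

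The key computational step is to reduce these relations to identify the group and the generator. Because the $s-1$ ``middle'' knots (those carrying framings $-3$ on the diagonal and $-2$ off-diagonal in the block of $M$) enter symmetrically, I expect the $s-1$ corresponding meridians $[\mu_1],\ldots,[\mu_{s-1}]$ to all be congruent to the same multiple of the chosen generator, just as $[\mu_3]=[\mu_4]$ never literally happened but an analogous pattern did in the $s=2$ case. I would pick the class of the meridian of one middle knot, call it $[\mu_0]$, as generator, solve the linear relations to write each $[\mu_i]$ and $[\mu_L]$ as an integer multiple of $[\mu_0]$, and check that the order of $[\mu_0]$ is exactly $ns^2-s+1$. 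The class $[\mu_L]$ should again turn out to be a generator, which is what allows the final answer to be phrased in terms of $[\mu_L]$.

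Once the meridian relations are in hand, the Euler class is read off as
\[
e(\xi)=\sum_i \rot(L_i)\,[\mu_i],
\]
where the $\rot(L_i)$ are precisely the entries of $\vrot=(0,0,\underbrace{1,\ldots,1}_{s-1},q-p,l-k)^{\ttt}$ computed in the proof of Proposition~\ref{prop:exc-s-n}. Substituting the expressions for the $[\mu_i]$ in terms of $[\mu_L]$ and collecting terms should yield the claimed formula $(p-q+1)ns+(l-k)s\bmod ns^2-s+1$. As a cross-check, one can instead compute $\rot(L)$ in the surgered $S^3$ via the formula $\rot(L)=\rot_0-\langle\vrot,M^{-1}\vlk\rangle$ with $\vlk=(-1,\ldots,-1,0)^{\ttt}$, exactly as in the $s=2$ proof, and confirm that the two methods agree.

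The main obstacle will be the symbolic linear algebra: unlike the explicit $6\times 6$ matrices of the earlier sections, here $M$ has variable size $(s+3)\times(s+3)$ and a nontrivial $(s-1)\times(s-1)$ block with $-3$ on the diagonal and $-2$ off it. Solving $M\bfx=\vrot$ and inverting the relations in closed form in $s$, $n$, $k$, $l$, $p$, $q$ requires care; I would exploit the symmetry of that block (its inverse has a simple rank-one-plus-scalar structure) and perform the same row reduction that already gave $\det M=(-1)^{s-1}$ in the proof of Proposition~\ref{prop:exc-s-n}, reusing that computation to keep the bookkeeping tractable. The coefficient $(p-q+1)ns$ of the $[\mu_L]$-term is the delicate part, since it mixes the contribution $q-p$ of the $\mu_{s+1}$-rotation number with the factor $ns$ produced by the relations, and getting its sign and normalisation right is where I would expect to have to be most careful.
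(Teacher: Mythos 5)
Your proposal takes essentially the same approach as the paper's own proof of Lemma~\ref{lem:e}: present $H_1(\partial X)$ by the meridian classes, derive the relations from the (sign-changed) linking matrix as in Section~\ref{section:s=2}, express every meridian as a multiple of $[\mu_L]$ (the paper obtains $[\mu_1]=[\mu_2]=-[\mu_L]$, $[\nu_1]=\cdots=[\nu_{s-1}]=(1-ns)[\mu_L]$, $[\alpha_s]=-ns[\mu_L]$, $[\beta_n]=s[\mu_L]$ and $(ns^2-s+1)[\mu_L]=0$), and read off $e(\xi)=\sum_i\rot(L_i)[\mu_i]$ from the vector $\vrot$, which yields the claimed formula. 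The only differences are cosmetic: the paper works directly with $[\mu_L]$ as the preferred generator instead of first passing through a middle-knot meridian, and note that the surgery link has $s+4$ components, so the meridians should run $[\mu_1],\ldots,[\mu_{s+3}]$ rather than ending at $[\mu_{s+2}]$.
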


\begin{rem}
Notice that $l-k$ takes values in the range
\[ l-k\in\{-n+2,-n+4,\ldots, n-4,n-2\};\]
for $p-q+1$ the range is
\[ \{ -s+2, -s+4,\ldots,s-4,s-2\}.\]
So the first summand in the expression $e(k,l,p,q)$ for $e(\xi)$ varies in
steps of size $2ns$, whereas the second summand ranges between
$\pm(n-2)s$. This means that there are no duplications in
this list of Euler numbers, at least before reducing modulo
$ns^2-s+1$.

In $\Z$ we have
\[ e_{\min}:=-ns^2+(n+2)s\leq e(k,l,p,q)\leq
ns^2-(n+2)s=:e_{\max},\]
so $e_{\max}-e_{\min}<2(ns^2-s+1)$. When we bring the negative $e(k,l,p,q)$
modulo $ns^2-s+1$ into the range $(0,ns^2-s+1)$, they are congruent to
$1$ modulo~$s$, whereas the positive $e(k,l,p,q)$ are divisible by~$s$,
so there are no duplications even modulo $ns^2-s+1$. Moreover, we have
\[ e_{\min}+ns^2-s+1=(n+1)s+1>(n-1)s-1\]
and
\[ -(n-1)s+1+ns^2-s+1=ns^2-ns+2>e_{\max},\]
which implies that there are also no duplications with the
Euler numbers (modulo $ns^2-s+1$) coming from
Proposition~\ref{prop:EH-s}.
\end{rem}

\begin{proof}[Proof of Lemma~\ref{lem:e}]
We label the meridional classes corresponding to the knots in
Figure~\ref{figure:neg-torus-knot-gen} from bottom to top as
\[ [\mu_L], [\mu_1], [\mu_2], [\nu_1],\ldots, [\nu_{s-1}],
[\alpha_s],[\beta_n].\]
We then compute the relations between these generators from the
matrix $M$ as in Section~\ref{section:s=2}, to obtain
\[ [\mu_1]=-[\mu_L],\;\;\;
[\mu_2]=-[\mu_L],\;\;\;
[\nu_1]=\ldots=[\nu_{s-1}]=(1-ns)[\mu_L],\]
\[ [\alpha_s]=-ns[\mu_L],\;\;\;
[\beta_n]=s[\mu_L],\;\;\;
(ns^2-s+1)[\mu_L]=0.\]
From the vector $\vrot$ we then compute
\begin{eqnarray*}
e(\xi) & = & [\nu_1]+\cdots+[\nu_{s-1}]+(q-p)[\alpha_s]+(l-k)[\beta_n]\\
       & = & \bigl((p-q+1)ns+(l-k)s\bigr)\,[\mu_L],
\end{eqnarray*}
as claimed.
\end{proof}

\begin{rem}
In the proofs of Propositions \ref{prop:exc-2-n} and~\ref{prop:exc-s-n}
we used the $d_3$-invariant to show that $L$ lives in an
overtwisted contact structure on~$S^3$, and hence $L$ is exceptional,
since Legendrian surgery along $L$ produces a tight contact structure.
The information that $L$ is exceptional was not actually necessary for
proving Theorem~\ref{thm:main}. Rather, the converse is true:
the fact that the resulting tight contact structure on
$L(ns^2-s+1,s^2)$ is different from any structure obtained from
a Legendrian realisation of $T(s,-(sn-1))$ in $(S^3,\xist)$
gives an alternative criterion for establishing the
exceptional character of~$L$.
\end{rem}
\begin{ack}
The research for this paper was done during an enjoyable
\emph{Research in Pairs} stay at the Mathematisches Forschungsinstitut
Oberwolfach, 27 March -- 9 April 2016. We thank the Forschungsinstitut for
its support, and its efficient and friendly staff for creating, once
again, an inspiring environment.

S.O.\ is partially supported by the grants H\"UBAP FBB-2016-9429 and
T\"UB\.ITAK \#115F519.
\end{ack}

\end{document}